\documentclass[preprint, times, numbers,3p, sort&compress]{elsarticle}
\usepackage{amssymb}
\usepackage{amsthm}
\usepackage{graphicx}%
\usepackage{multirow}%
\usepackage{amsmath,amssymb,amsfonts}%
\usepackage{mathrsfs}%
\usepackage{dsfont}%
\usepackage[title]{appendix}%
\usepackage{xcolor}%
\usepackage{textcomp}%
\usepackage{manyfoot}%
\usepackage{booktabs}%
\usepackage{algorithm}%
\usepackage{algorithmicx}%
\usepackage{algpseudocode}%
\usepackage{listings}%
\usepackage{url}
\usepackage{makecell}
\usepackage{hyperref}
\usepackage{pgfplots}
\usepackage{tikz}
\usepackage{subcaption}
\usepackage{multicol}

\pgfplotsset{compat=1.18}

\journal{Indagationes Mathematicae}

\newtheorem{theorem}{Theorem}[section]
\newtheorem{proposition}[theorem]{Proposition}

\newtheorem{corollary}[theorem]{Corollary}

\newtheorem{remark}[theorem]{Remark}

\newcommand{\R}{\mathbb R}
\newcommand{\C}{\mathbb C}
\newcommand{\Z}{\mathbb Z}
\newcommand{\CP}{\mathbb{CP}}
\newcommand{\dd}{\,\mathrm d}
\newcommand{\sn}{\operatorname{sn}}

\newcommand{\ns}{\operatorname{ns}}
\newcommand{\Pn}{\mathcal P_N}

\definecolor{loopzero}{HTML}{D62728}
\definecolor{loopone}{HTML}{1F77B4}
\definecolor{loopa}{HTML}{2CA02C}
\definecolor{loopinf}{HTML}{9467BD}
\begin{document}

\begin{frontmatter}

\title{Some Results on Lam\'e--Heun Spectral Geometry}

\author[aff1]{U.S. Idiong\corref{cor1}}
\ead{idiongus@afued.edu.ng}
\cortext[cor1]{Corresponding author.}
\address[aff1]{Department of Mathematics, Adeyemi Federal University of Education, Ondo, Nigeria}

\begin{abstract}
We develop a unified spectral and complex-analytic theory of the Lamé equation through its canonical Heun realization. Starting with ellipsoidal separation, we derive essential elements, including the Lamé-preserving anharmonic parameter action and finite accessory spectral polynomials. We construct the global Heun monodromy representation on the four-punctured sphere, connected to the $(2,2,2,2)$ pillowcase orbifold and the $SL_2(\C)$ character surface. The accessory parameter is framed within a Riemann–Hilbert map that links spectral values to monodromy conditions. Utilizing Seifert–van Kampen theory reveals the fundamental group of the punctured sphere, and the Nielsen–Schreier rank formula applies to monodromy coverings. The vanishing of higher homotopy groups for unbranched covers and the emergence of a Riemann–Hurwitz genus formula post-compactification are noted. Colored Schreier graphs are employed for a visual calculus of the covering data, connecting elliptic geometry and topology.
\end{abstract}

\begin{keyword}
Lam\'e equation \sep Heun monodromy \sep covering spaces \sep character varieties \sep accessory spectrum
\end{keyword}

\end{frontmatter}

\section{Introduction}
The Lam\'e equation is a classical second-order differential equation arising from separation of Laplace's equation in ellipsoidal coordinates. It also appears in finite-gap spectral theory, elliptic Schr\"odinger operators, special-function theory, and the study of Fuchsian equations \cite{WhittakerWatson,Ince,Arscott,Erdelyi,Maier2004}.

Several equivalent forms occur in the literature. First, we consider the relationship between the equivalent forms in the order stated below.
\begin{equation}
\begin{aligned}
\text{Laplace equation}
&\longrightarrow
\text{ellipsoidal Lam\'e equation}
\longrightarrow
\text{algebraic Lam\'e equation}\\
&\longrightarrow
\text{Weierstrass form}
\longleftrightarrow
\text{Jacobi form}
\longrightarrow
\text{Heun equation}.
\end{aligned}
\label{eq:chain}
\end{equation}
The first aim of this paper is to present this chain without the common normalization errors that arise from ellipsoidal derivatives, the point at infinity, or Jacobi imaginary translations. The second aim is to exploit the Heun embedding rather than stop at it. Once the Lam\'e equation is written as a special Heun equation, the anharmonic subgroup of the $192$-element Heun group acts explicitly on the modulus and accessory parameter. This produces a sixfold parameter-equivalence orbit, special harmonic and equianharmonic strata, and affine maps between spectral problems. For even integral coupling, the same Heun realization gives a finite polynomial sector and a tridiagonal accessory matrix.

Throughout, ``equivalent'' is used carefully. A change of variable gives differential-equation equivalence; a gauge factor gives local-solution equivalence; and a spectral equivalence requires the corresponding eigenvalue normalization to be tracked explicitly.

The global extension developed here is equally careful about the word ``monodromy.'' For fixed exponent data, the accessory parameter does not determine a single universal finite group. Rather, it determines a representation of the fundamental group of the four-punctured sphere into $GL_2(\C)$, whose image may be generic irreducible and infinite, reducible, projectively unitary, or finite in exceptional cases. The paper therefore derives the complete local conjugacy data, the global presentation, the Lam\'e orbifold quotient, and the induced covering-space invariants instead of claiming an impossible finite enumeration of all accessory-dependent images. This viewpoint follows the classical Heun monodromy and finite-gap literature while making its relation to the Lam\'e accessory spectrum explicit \cite{DLMF,Takemura2008,Xia2021}.

The paper is organized into four principal sections. Section~2 collects the classical analytic,
elliptic, spectral, monodromy, and covering-space results used later. Section~3 contains the
results derived in this work, including the Lam\'e-preserving Heun symmetry action, accessory
spectral polynomials, coefficient asymptotics, monodromy alignment, and the resulting global
spectral--topological consequences. Section~4 discusses the results, their physical applications,
and open problems suggested by the analysis.

\section{Mathematical Preliminaries}
\label{sec:preliminaries}

This section gathers the established results and standard constructions used throughout the
paper. The purpose is to separate the classical analytic and topological background from the
new results developed in Section~\ref{sec:main-results}.

\subsection{Elliptic-function background}
Let
\begin{equation}
\Lambda=\{2m\omega_1+2n\omega_2:m,n\in\Z\},
\qquad \omega_2/\omega_1\notin\R.
\end{equation}
The Weierstrass function is
\begin{equation}
\wp(z)=\frac1{z^2}+\sum_{\omega\in\Lambda\setminus\{0\}}
\left[\frac1{(z-\omega)^2}-\frac1{\omega^2}\right],
\end{equation}
and satisfies
\begin{equation}
(\wp'(z))^2=4\wp(z)^3-g_2\wp(z)-g_3
=4\prod_{j=1}^{3}(\wp(z)-e_j),
\label{eq:wp-cubic}
\end{equation}
where $e_j=\wp(\omega_j)$, $\omega_3=\omega_1+\omega_2$, and $e_1+e_2+e_3=0$. The discriminant is
\begin{equation}
\Delta=g_2^3-27g_3^2
=16(e_1-e_2)^2(e_1-e_3)^2(e_2-e_3)^2.
\end{equation}
We assume $\Delta\neq0$ whenever elliptic uniformization is used.

The function $\wp$ is doubly periodic, whereas the associated sigma and zeta functions are quasi-periodic. In particular,
\begin{equation}
\sigma(z+2\omega_j)=-\sigma(z)e^{2\eta_j(z+\omega_j)},
\qquad
\zeta(z+2\omega_j)=\zeta(z)+2\eta_j.
\end{equation}
This distinction matters when solution formulas are transported between elliptic representations.

\subsection{Confocal ellipsoidal coordinates and Lam\'e separation}
Let $a>b>c>0$. The confocal quadrics are
\begin{equation}
\frac{x^2}{a^2+s}+\frac{y^2}{b^2+s}+\frac{z^2}{c^2+s}=1,
\label{eq:confocal}
\end{equation}
with roots $s=\lambda,\mu,\nu$ at a generic point. Define
\begin{equation}
\phi(s)=(s+a^2)(s+b^2)(s+c^2).
\end{equation}
Then
\begin{align}
x^2&=\frac{(a^2+\lambda)(a^2+\mu)(a^2+\nu)}{(a^2-b^2)(a^2-c^2)},\label{eq:xell}\\
y^2&=\frac{(b^2+\lambda)(b^2+\mu)(b^2+\nu)}{(b^2-a^2)(b^2-c^2)},\\
z^2&=\frac{(c^2+\lambda)(c^2+\mu)(c^2+\nu)}{(c^2-a^2)(c^2-b^2)}.
\end{align}
Taking logarithmic derivatives of \eqref{eq:xell} gives, for example,
\begin{equation}
\frac{\partial x}{\partial\lambda}=\frac{x}{2(a^2+\lambda)},
\qquad
\frac{\partial x}{\partial\mu}=\frac{x}{2(a^2+\mu)},
\qquad
\frac{\partial x}{\partial\nu}=\frac{x}{2(a^2+\nu)}.
\end{equation}
The corresponding formulas for $y$ and $z$ follow cyclically.

\begin{proposition}
The coordinates $(\lambda,\mu,\nu)$ are mutually orthogonal wherever the three roots are distinct. Their Lam\'e scale factors are
\begin{equation}
h_\lambda^2=\frac{(\lambda-\mu)(\lambda-\nu)}{4\phi(\lambda)},
\qquad
h_\mu^2=\frac{(\mu-\lambda)(\mu-\nu)}{4\phi(\mu)},
\qquad
h_\nu^2=\frac{(\nu-\lambda)(\nu-\mu)}{4\phi(\nu)}.
\end{equation}
\end{proposition}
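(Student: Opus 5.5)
The plan is to compute the metric tensor directly from the logarithmic-derivative formulas. For a coordinate $s\in\{\lambda,\mu,\nu\}$ these give $\partial x/\partial s = x/(2(a^2+s))$, $\partial y/\partial s = y/(2(b^2+s))$ and $\partial z/\partial s = z/(2(c^2+s))$. Hence for two coordinates $s,t$ the metric coefficient is
\begin{equation*}
g_{st}=\frac14\left[\frac{x^2}{(a^2+s)(a^2+t)}+\frac{y^2}{(b^2+s)(b^2+t)}+\frac{z^2}{(c^2+s)(c^2+t)}\right].
\end{equation*}
The whole proof reduces to evaluating this sum. I would do it in two cases, $s\neq t$ (orthogonality) and $s=t$ (scale factors).

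\textbf{Orthogonality.} For $s\neq t$ both roots of the confocal equation, I would use partial fractions,
\begin{equation*}
\frac{1}{(a^2+s)(a^2+t)}=\frac{1}{t-s}\left(\frac{1}{a^2+s}-\frac{1}{a^2+t}\right),
\end{equation*}
and similarly for $b^2,c^2$. Then $4(t-s)g_{st}$ is the difference of the left-hand sides of \eqref{eq:confocal} evaluated at $s$ and at $t$. Both equal $1$, so the difference is $0$. This gives $g_{st}=0$ wherever the roots are distinct, and the distinctness is needed only to divide by $t-s$.

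\textbf{Scale factors.} For $s=\lambda$ we need
\begin{equation*}
S(\lambda)=\sum \frac{x^2}{(a^2+\lambda)^2},
\end{equation*}
summed over the three terms. I would introduce the rational function
\begin{equation*}
F(\sigma)=\frac{x^2}{a^2+\sigma}+\frac{y^2}{b^2+\sigma}+\frac{z^2}{c^2+\sigma}-1 .
\end{equation*}
Clearing denominators shows that $F(\sigma)\phi(\sigma)$ is a cubic in $\sigma$ with leading coefficient $-1$ and roots $\lambda,\mu,\nu$. Therefore
\begin{equation*}
F(\sigma)=-\frac{(\sigma-\lambda)(\sigma-\mu)(\sigma-\nu)}{\phi(\sigma)}.
\end{equation*}
Differentiating gives $S(\lambda)=-F'(\lambda)$. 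Since $\sigma-\lambda$ vanishes at $\sigma=\lambda$, only the derivative of that factor survives, so $-F'(\lambda)=(\lambda-\mu)(\lambda-\nu)/\phi(\lambda)$. Hence $h_\lambda^2=g_{\lambda\lambda}=S(\lambda)/4$, which is the stated formula. The cases $\mu$ and $\nu$ follow by the same computation with the roles of the roots permuted.

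\textbf{Expected obstacle.} The main care point is the scale-factor step: justifying the factorization of $F$, in particular the sign and the leading coefficient $-1$, and handling the factor $\tfrac14$. As a cross-check, the identity $F(\sigma)\phi(\sigma)=-(\sigma-\lambda)(\sigma-\mu)(\sigma-\nu)$ also reproduces \eqref{eq:xell}: taking residues of $F$ at $\sigma=-a^2$ gives the formula for $x^2$, and likewise for $y^2,z^2$. This confirms the normalization is consistent with the paper's coordinate formulas.
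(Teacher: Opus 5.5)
Your proof is correct and follows the paper's route. The paper proves orthogonality by subtracting the confocal equation at two distinct roots and dividing by their difference; your partial-fraction step is exactly this, applied to $\partial_s(x,y,z)$, which is parallel to the paper's normal vector. The paper obtains the scale factors from $|\partial_s(x,y,z)|^2$, and your factorization $F(\sigma)\phi(\sigma)=-(\sigma-\lambda)(\sigma-\mu)(\sigma-\nu)$ is a clean way to carry out the simplification the paper only describes as ``substituting the coordinate formulas''; your residue check confirms consistency with \eqref{eq:xell}.
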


\begin{proof}
The normal to the quadric $s=\mathrm{const.}$ is proportional to
\begin{equation}
\left(\frac{x}{a^2+s},\frac{y}{b^2+s},\frac{z}{c^2+s}\right).
\end{equation}
For distinct roots $s_1,s_2$, subtraction of the two equations \eqref{eq:confocal} after division by $s_1-s_2$ gives zero scalar product of the corresponding normals. The scale factors follow by substituting the coordinate formulas into $h_s^2=|\partial_s(x,y,z)|^2$.
\end{proof}

The Euclidean Laplacian therefore becomes
\begin{align}
\Delta={}&
\frac{4\sqrt{\phi(\lambda)}}{(\lambda-\mu)(\lambda-\nu)}
\frac{\partial}{\partial\lambda}
\left(\sqrt{\phi(\lambda)}\frac{\partial}{\partial\lambda}\right)
\nonumber\\
&+
\frac{4\sqrt{\phi(\mu)}}{(\mu-\lambda)(\mu-\nu)}
\frac{\partial}{\partial\mu}
\left(\sqrt{\phi(\mu)}\frac{\partial}{\partial\mu}\right)
\nonumber\\
&+
\frac{4\sqrt{\phi(\nu)}}{(\nu-\lambda)(\nu-\mu)}
\frac{\partial}{\partial\nu}
\left(\sqrt{\phi(\nu)}\frac{\partial}{\partial\nu}\right).
\label{eq:ellipsoid-laplacian}
\end{align}
For $\Psi=L(\lambda)M(\mu)N(\nu)$, separation of $\Delta\Psi=0$ gives a common ordinary differential equation
\begin{equation}
4\phi(s)X''(s)+2\phi'(s)X'(s)-[n(n+1)s+C]X(s)=0.
\label{eq:ellipsoid-lame}
\end{equation}
Equivalently,
\begin{equation}
X''+\frac12\left(\frac1{s+a^2}+\frac1{s+b^2}+\frac1{s+c^2}\right)X'
-\frac{n(n+1)s+C}{4\phi(s)}X=0.
\end{equation}
Its regular singular points on the Riemann sphere are $-a^2,-b^2,-c^2,\infty$. The finite exponent pairs are $(0,1/2)$, and the exponents at infinity are $-n/2$ and $(n+1)/2$.

\subsection{Algebraic and Weierstrass forms}
Set
\begin{equation}
c_0=\frac{a^2+b^2+c^2}{3},
\qquad
p=-s-c_0,
\end{equation}
and define
\begin{equation}
e_1=a^2-c_0,
\qquad e_2=b^2-c_0,
\qquad e_3=c^2-c_0.
\end{equation}
Then $e_1+e_2+e_3=0$. With
\begin{equation}
B=n(n+1)c_0-C,
\end{equation}
Eq.~\eqref{eq:ellipsoid-lame} becomes
\begin{equation}
X''(p)+\frac12\sum_{j=1}^{3}\frac1{p-e_j}X'(p)
-\frac{n(n+1)p+B}{4(p-e_1)(p-e_2)(p-e_3)}X(p)=0.
\label{eq:algebraic-lame}
\end{equation}
Let $P(p)=\prod_{j=1}^{3}(p-e_j)$. Then
\begin{equation}
4P(p)X''(p)+2P'(p)X'(p)-[n(n+1)p+B]X(p)=0.
\end{equation}

\begin{theorem}[Weierstrass uniformization]
Under $p=\wp(u)$, Eq.~\eqref{eq:algebraic-lame} is equivalent to
\begin{equation}
X''(u)-[n(n+1)\wp(u)+B]X(u)=0,
\label{eq:wp-lame}
\end{equation}
or
\begin{equation}
\left[-\frac{\dd^2}{\dd u^2}+n(n+1)\wp(u)\right]X=-BX.
\end{equation}
\end{theorem}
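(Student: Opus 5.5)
The plan is a direct chain-rule computation under the substitution $p=\wp(u)$, using the Weierstrass differential equation \eqref{eq:wp-cubic}. I will use the polynomial form $4P(p)X''(p)+2P'(p)X'(p)-[n(n+1)p+B]X(p)=0$ of \eqref{eq:algebraic-lame}, with $P(p)=\prod_{j}(p-e_j)$. The $e_j$ defined from $a,b,c$ satisfy $e_1+e_2+e_3=0$, so I can choose a lattice $\Lambda$ whose Weierstrass invariants are $g_2=-4(e_1e_2+e_1e_3+e_2e_3)$ and $g_3=4e_1e_2e_3$. Then \eqref{eq:wp-cubic} reads $(\wp')^2=4P(\wp)$. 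The discriminant is nonzero because $a>b>c>0$ makes the $e_j$ distinct, so such a lattice exists.

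Set $Y(u)=X(\wp(u))$. The chain rule gives $Y'=\wp'X'(p)$ and $Y''=(\wp')^2X''(p)+\wp''X'(p)$. I will differentiate $(\wp')^2=4P(\wp)$ and divide by $2\wp'$ (valid away from the half-periods) to get $\wp''=2P'(\wp)$. Substituting, I expect
\[
Y''=4P(p)X''(p)+2P'(p)X'(p)\Big|_{p=\wp(u)},
\]
which is exactly the derivative part of the polynomial form. The equation therefore becomes $Y''-[n(n+1)\wp(u)+B]Y=0$, which is \eqref{eq:wp-lame}. Moving terms gives the Schrödinger form $[-\frac{d^2}{du^2}+n(n+1)\wp]Y=-BY$.

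For the converse I will note that $\wp$ is a local biholomorphism wherever $\wp'\neq0$. On that set the computation reverses, so any local solution $Y$ of \eqref{eq:wp-lame} descends to a local solution $X=Y\circ\wp^{-1}$ of \eqref{eq:algebraic-lame}. Equivalence is meant in the sense of change of variables, as the introduction stipulates.

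The main obstacle is the locus where the substitution is not locally invertible. This consists of the half-periods $\omega_j$, where $\wp=e_j$ and $\wp'=0$, and the lattice points, which map to $p=\infty$. I will handle it by noting two things. First, \eqref{eq:wp-lame} has holomorphic coefficients at $\omega_j$, where $\wp$ is finite. Second, the finite exponents $(0,1/2)$ in $p$ pull back to exponents $(0,1)$ in $u$, because $p-e_j\sim c(u-\omega_j)^2$. So the half-periods are apparent ordinary points, and the identity extends by continuity. At $u=0$, the exponents $-n/2,(n+1)/2$ at infinity pull back via $p\sim u^{-2}$ to $n+1,-n$, matching the indicial equation of $\wp\sim u^{-2}$ with coefficient $n(n+1)$. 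This serves as a consistency check.
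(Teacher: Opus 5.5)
Your proposal is correct and follows the paper's proof. Like the paper, you substitute $p=\wp(u)$ and use $(\wp')^2=4P(\wp)$ together with its derivative $\wp''=2P'(\wp)$, so that $\frac{d^2X}{du^2}=4P(p)X''(p)+2P'(p)X'(p)$ reproduces the derivative part of the polynomial form of the algebraic equation. Your additional points are details the paper leaves implicit, not a different argument: existence of a lattice realizing the prescribed $e_j$ since $\Delta\neq0$, invertibility of the substitution away from half-periods and lattice points, and the exponent checks at $\omega_j$ and $u=0$.
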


\begin{proof}
Using \eqref{eq:wp-cubic}, $(\wp')^2=4P(\wp)$. Moreover,
\begin{equation}
\frac{\dd^2X}{\dd u^2}=(\wp')^2X''(\wp)+\wp''X'(\wp).
\end{equation}
Since $2\wp''=4P'(\wp)$, substitution into the algebraic equation cancels the first-derivative term and gives \eqref{eq:wp-lame}.
\end{proof}

\subsection{Jacobi representation and the classical Heun embedding}
Assume $e_1>e_2>e_3$ and set
\begin{equation}
A=\sqrt{e_1-e_3},
\qquad
m=k^2=\frac{e_2-e_3}{e_1-e_3}.
\end{equation}
The standard identity is
\begin{equation}
\wp(u)=e_3+(e_1-e_3)\ns^2(Au\mid m).
\label{eq:wp-ns}
\end{equation}
With $x=Au$, Eq.~\eqref{eq:wp-lame} becomes
\begin{equation}
X''(x)-\left[n(n+1)\ns^2(x\mid m)+E\right]X(x)=0,
\qquad
E=\frac{n(n+1)e_3+B}{e_1-e_3}.
\label{eq:ns-lame}
\end{equation}
The Jacobi imaginary-translation identity
\begin{equation}
\ns(x\mid m)=k\,\sn(x+iK'\mid m)
\end{equation}
requires the translated variable $v=x+iK'$. Hence
\begin{equation}
-X''(v)+n(n+1)k^2\sn^2(v\mid m)X=-EX.
\label{eq:jacobi-lame}
\end{equation}
No identification of $\sn(x+iK')$ with $\sn x$ is made.

Now consider the standard Jacobi Lam\'e equation
\begin{equation}
Y''(u)+[h-\nu(\nu+1)k^2\sn^2(u\mid m)]Y(u)=0.
\label{eq:jacobi-standard}
\end{equation}
Set $z=\sn^2(u\mid m)$. Then
\begin{equation}
\left(\frac{\dd z}{\dd u}\right)^2=4z(1-z)(1-k^2z),
\qquad
\frac{\dd^2z}{\dd u^2}=2[1-2(1+k^2)z+3k^2z^2].
\end{equation}
Equation \eqref{eq:jacobi-standard} becomes the canonical Heun equation
\begin{equation}
Y''+\left(\frac{\gamma}{z}+\frac{\delta}{z-1}+\frac{\epsilon}{z-a_{\mathrm H}}\right)Y'
+\frac{\alpha\beta z-q}{z(z-1)(z-a_{\mathrm H})}Y=0,
\label{eq:heun}
\end{equation}
with
\begin{equation}
a_{\mathrm H}=k^{-2},
\qquad
\gamma=\delta=\epsilon=\frac12,
\end{equation}
and
\begin{equation}
\alpha=-\frac{\nu}{2},
\qquad
\beta=\frac{\nu+1}{2},
\qquad
q=-\frac{h}{4k^2}.
\label{eq:lame-heun-parameters}
\end{equation}
The Fuchs relation $\alpha+\beta+1=\gamma+\delta+\epsilon$ is immediate.

\subsection{Classical Heun transformation group}
\label{sec:classical-heun-group}

The full transformation group of the general Heun equation has order $192$ and is isomorphic to the Coxeter group $W(D_4)$ \cite{Maier2007}. The full group does not preserve the Lam\'e parameter locus
\begin{equation}
\mathcal L=\left\{(a,q,\alpha,\beta,\tfrac12,\tfrac12,\tfrac12)\right\},
\end{equation}
because exponent flips or exchanges with infinity generally change the finite exponent differences. The subgroup generated by permutations of the three finite singularities does preserve $\mathcal L$. It is the anharmonic group, isomorphic to $S_3$.
The full group-theoretic description above is classical; it is used only as background for the
Lam\'e-preserving subgroup derived in Section~\ref{sec:symmetry}.

\subsection{Trigonometric degeneration}
Let $q_{\rm nome}=e^{i\pi\tau}$ with $\operatorname{Im}\tau\to+\infty$. For $2\omega_1=1$,
\begin{equation}
\wp(z)=\pi^2\csc^2(\pi z)-\frac{\pi^2}{3}+O(q_{\rm nome}^2).
\end{equation}
Hence
\begin{align}
-\frac{\dd^2}{\dd z^2}+n(n+1)\wp(z)
\longrightarrow{}&
-\frac{\dd^2}{\dd z^2}+n(n+1)\pi^2\csc^2(\pi z)
\nonumber\\
&-\frac{\pi^2}{3}n(n+1).
\end{align}
The constant term is a spectral shift. In Heun language this degeneration corresponds to a singular limit in which two regular singularities effectively coalesce after rescaling; consequently the full $192$-element Fuchsian symmetry should not be carried unchanged into the confluent problem.

\subsection{Classical completeness, local series, and convergence geometry}
\label{sec:classical-spectral}

The Heun embedding makes it possible to connect three questions that are usually treated separately \cite{DLMF,Eastham,MagnusWinkler}: Hilbert-space completeness of Lam\'e eigenfunctions, analytic continuation of the corresponding Fuchs--Frobenius solutions, and asymptotic growth of their Taylor coefficients. The first is a global operator-theoretic property; the second and third are local complex-analytic properties. Their intersection is precisely where the accessory parameter becomes spectral.

\subsubsection{Completeness on a period cell}

Let $0<m<1$ and consider the Jacobi Lam\'e operator
\begin{equation}
H_{\rm L}=-\frac{\dd^2}{\dd u^2}+\nu(\nu+1)m\sn^2(u\mid m)
\label{eq:lame-selfadjoint}
\end{equation}
on the interval $[0,K(m)]$. Since $m\sn^2(u\mid m)$ is real and bounded on this compact interval, every separated regular boundary condition defines a standard Sturm--Liouville problem.

\begin{theorem}[Finite-interval completeness]
\label{thm:finite-complete}
Let $\nu\in\R$, $0<m<1$, and let $H_{\rm L}$ be equipped with any of the four separated self-adjoint endpoint conditions
\begin{align}
&Y'(0)=Y'(K)=0, 
&&Y(0)=Y'(K)=0,\nonumber\\
&Y'(0)=Y(K)=0,
&&Y(0)=Y(K)=0.
\label{eq:lame-bcs}
\end{align}
Then $H_{\rm L}$ is self-adjoint with compact resolvent in $L^2(0,K)$. Its spectrum is real, discrete and unbounded above, and its eigenfunctions form a complete orthonormal basis of $L^2(0,K)$.
\end{theorem}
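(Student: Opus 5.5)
The plan is to reduce the statement to regular Sturm--Liouville theory on a compact interval with a bounded real potential. Write $H_{\rm L}=-\frac{\dd^2}{\dd u^2}+V(u)$ with $V(u)=\nu(\nu+1)m\sn^2(u\mid m)$. For $0<m<1$ and real $u$, $\sn(\cdot\mid m)$ is real-analytic with $|\sn|\le 1$. Hence $V$ is real, continuous, and bounded on $[0,K]$, with $\|V\|_\infty\le|\nu(\nu+1)|m$.

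\textbf{Step 1: the free operator.} Take $H_0=-\frac{\dd^2}{\dd u^2}$ on the domain
\[
D=\{Y\in H^2(0,K): \text{the chosen pair of conditions in \eqref{eq:lame-bcs} holds}\}.
\]
For $Y,Z\in D$, integrating by parts twice gives
\[
\langle H_0Y,Z\rangle-\langle Y,H_0Z\rangle=\bigl[Y\bar Z'-Y'\bar Z\bigr]_0^K .
\]
Each of the four separated conditions makes both endpoint contributions vanish. For example, at $u=0$, if $Y(0)=Z(0)=0$ or $Y'(0)=Z'(0)=0$, the term is zero. So $H_0$ is symmetric.

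To get self-adjointness, compute the spectrum explicitly. It is $\{(j\pi/K)^2\}$ or $\{((j+\tfrac12)\pi/K)^2\}$, with eigenfunctions $\cos$ or $\sin$ of the appropriate frequency. These families are complete by classical Fourier theory; for the mixed cases use the quarter-wave bases. Solving $H_0Y=f$ by variation of constants shows that $H_0-\lambda$ is onto $L^2$ for $\lambda$ outside this set. So $H_0$ is self-adjoint.

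The resolvent is an integral operator with a continuous Green kernel on $[0,K]^2$. It is therefore Hilbert--Schmidt, and in particular compact.

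\textbf{Step 2: bounded perturbation.} Multiplication by $V$ is a bounded self-adjoint operator on $L^2(0,K)$. By Kato--Rellich, or simply because bounded symmetric perturbations preserve self-adjointness on the same domain, $H_{\rm L}=H_0+V$ is self-adjoint on $D$.

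Compactness of the resolvent follows from the second resolvent identity:
\[
(H_{\rm L}-\lambda)^{-1}=(H_0-\lambda)^{-1}\bigl[I-V(H_{\rm L}-\lambda)^{-1}\bigr].
\]
The right side is a compact operator composed with a bounded one. Alternatively, construct the Green function of $H_{\rm L}$ directly from two solutions of $H_{\rm L}Y=\lambda Y$ satisfying the left and right boundary conditions. Their Wronskian is nonzero for non-real $\lambda$ by symmetry.

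\textbf{Step 3: conclusions.} The spectral theorem for compact self-adjoint operators is applied to $R=(H_{\rm L}-\lambda_0)^{-1}$ with real $\lambda_0$ in the resolvent set. It gives an orthonormal basis of $L^2(0,K)$ consisting of eigenvectors of $R$. The eigenvalues $\mu_j$ of $R$ accumulate only at $0$.

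The eigenvalues of $H_{\rm L}$ are $\lambda_j=\lambda_0+1/\mu_j$. They are real and discrete. They are unbounded because $L^2$ is infinite-dimensional, so infinitely many $\mu_j\neq 0$. They are unbounded above because $H_{\rm L}\ge-\|V\|_\infty$: this follows from $\langle H_0Y,Y\rangle=\|Y'\|^2\ge0$ on $D$, once the boundary terms are killed as in Step 1. Hence $\mu_j\to0$ can only occur with $\lambda_j\to+\infty$.

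\textbf{Main obstacle.} The only delicate point is the self-adjointness in Step 1, namely that $D^*=D$ and not merely symmetry. I would handle it as above, by explicit surjectivity of $H_0-\lambda$, or by citing limit-circle/regular-endpoint theory: both endpoints are regular because the coefficients are integrable on the closed interval. Separated real conditions then give self-adjoint realizations. The rest is routine functional analysis.
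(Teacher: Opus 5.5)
Your proposal is correct and follows the paper's route: self-adjointness with compact resolvent for $-\dd^2/\dd u^2$ under each separated condition, preservation of both properties under the bounded real multiplication operator $V$, and then the spectral theorem for operators with compact resolvent. You also supply details the paper leaves implicit, all of them sound: the surjectivity criterion for self-adjointness, the second resolvent identity for compactness, and the lower bound $H_{\rm L}\ge-\|V\|_\infty$, which is what actually justifies ``unbounded above.''
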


\begin{proof}
The multiplication operator by $\nu(\nu+1)m\sn^2(u\mid m)$ is bounded and self-adjoint on $L^2(0,K)$. The second derivative with any of the separated conditions in \eqref{eq:lame-bcs} is self-adjoint with compact resolvent. Bounded self-adjoint perturbation preserves self-adjointness and compactness of the resolvent. The spectral theorem for self-adjoint operators with compact resolvent therefore yields a complete orthonormal eigenbasis.
\end{proof}

The four endpoint conditions correspond to the classical parity classes of Lam\'e functions. Under
\begin{equation}
z=\sn^2(u\mid m),
\label{eq:zmap-complete}
\end{equation}
one has $z\sim u^2$ near $u=0$ and
\begin{equation}
1-z\sim (1-m)(u-K)^2
\end{equation}
near $u=K$. Hence even and odd endpoint parity corresponds to choosing local Heun exponents $0$ and $1/2$, respectively, at $z=0$ and $z=1$.

On the whole line the operator is periodic rather than compact-resolvent. Its appropriate completeness statement is the Floquet--Bloch direct-integral decomposition. Thus the finite-interval Lam\'e functions provide ordinary Hilbert-space completeness, whereas the whole-line periodic problem provides generalized Bloch completeness. The finite polynomial sector of Section~\ref{sec:qes} is complete only inside its invariant polynomial module and must not be confused with either of these global notions.

\subsubsection{Local Heun series and exact coefficient recurrence}

Return to the canonical Heun equation
\begin{equation}
y''+
\left(
\frac{\gamma}{z}+\frac{\delta}{z-1}+\frac{\epsilon}{z-a}
\right)y'
+
\frac{\alpha\beta z-q}{z(z-1)(z-a)}y=0,
\label{eq:heun-series-eq}
\end{equation}
with the Fuchs relation
\begin{equation}
\alpha+\beta+1=\gamma+\delta+\epsilon.
\end{equation}
For the exponent-zero solution at $z=0$, write
\begin{equation}
y(z;q)=\sum_{n=0}^{\infty}c_n(q)z^n,
\qquad c_0=1.
\label{eq:heun-series}
\end{equation}
The coefficients satisfy
\begin{equation}
a\gamma c_1-qc_0=0
\end{equation}
and, for $n\geq1$,
\begin{equation}
R_n c_{n+1}-(Q_n+q)c_n+P_n c_{n-1}=0,
\label{eq:heun-recurrence-general}
\end{equation}
where
\begin{align}
P_n&=(n-1+\alpha)(n-1+\beta),\nonumber\\
Q_n&=n\left((n-1+\gamma)(1+a)+a\delta+\epsilon\right),\nonumber\\
R_n&=a(n+1)(n+\gamma).
\label{eq:PQR}
\end{align}
For the Lam\'e locus,
\begin{equation}
a=\frac1m,
\qquad
\gamma=\delta=\epsilon=\frac12,
\qquad
\alpha=-\frac{\nu}{2},
\qquad
\beta=\frac{\nu+1}{2},
\qquad
q=-\frac{h}{4m},
\label{eq:lame-heun-asymp-params}
\end{equation}
and the recurrence reduces to
\begin{align}
&a(n+1)\left(n+\frac12\right)c_{n+1}
-\bigl((1+a)n^2+q\bigr)c_n
\nonumber\\
&\hspace{30mm}
+\left(n-1-\frac{\nu}{2}\right)
\left(n+\frac{\nu-1}{2}\right)c_{n-1}=0.
\label{eq:lame-coeff-recurrence}
\end{align}
In particular,
\begin{equation}
c_1=\frac{2q}{a}=-\frac h2.
\end{equation}
Thus the accessory parameter enters the Taylor sequence from its first nontrivial coefficient onward.

\subsubsection{Radius of convergence and singularity geometry}

For an ordinary expansion point $z_0\notin\{0,1,a\}$, analytic ODE theory gives convergence at least up to the nearest finite singularity. Hence the generic Taylor radius is
\begin{equation}
R(z_0)=\min\{|z_0|,|z_0-1|,|z_0-a|\},
\label{eq:generic-radius}
\end{equation}
unless the nearest singularity is removable for the particular solution. At $z=0$ the natural Frobenius disk is
\begin{equation}
R_0=\min\{1,|a|\}.
\end{equation}
For the physical Lam\'e range $0<m<1$, one has $a=1/m>1$ and therefore
\begin{equation}
R_0=1.
\label{eq:R0}
\end{equation}
Similarly,
\begin{equation}
R_1=\min\left\{1,\frac{1-m}{m}\right\},
\qquad
R_a=\frac{1-m}{m}.
\label{eq:other-radii}
\end{equation}
The change at $m=1/2$ is geometric: it is exactly the point where the distances from $z=1$ to $0$ and to $a=1/m$ coincide.

\subsection{General analytic continuation and Heun monodromy}
\label{sec:classical-monodromy}

Let
\begin{equation}
X_a=\mathbb{CP}^1\setminus\{0,1,a,\infty\},
\qquad a\notin\{0,1\}.
\end{equation}
For fixed Heun parameters, let $\mathscr L_q$ denote the rank-two local system of solutions of the canonical Heun equation on $X_a$. Analytic continuation along a based loop $\gamma$ acts linearly on a chosen fundamental solution basis and defines the monodromy representation
\begin{equation}
\rho_q:\pi_1(X_a,z_*)\longrightarrow GL_2(\mathbb C).
\label{eq:monodromy-representation}
\end{equation}
Its image
\begin{equation}
\mathcal M(q)=\rho_q\bigl(\pi_1(X_a,z_*)\bigr)
\end{equation}
is the linear monodromy group, and its image in $PGL_2(\mathbb C)$ is the projective monodromy group $\mathbb P\mathcal M(q)$.

Choose positively oriented loops $\gamma_0,\gamma_1,\gamma_a,\gamma_\infty$ about the four punctures with the standard relation
\begin{equation}
\gamma_0\gamma_1\gamma_a\gamma_\infty=1.
\label{eq:loop-relation}
\end{equation}
Writing
\begin{equation}
M_s=\rho_q(\gamma_s),
\qquad s\in\{0,1,a,\infty\},
\end{equation}
we obtain
\begin{equation}
M_0M_1M_aM_\infty=I.
\label{eq:monodromy-product}
\end{equation}
The local exponent data of the Heun equation imply the spectral conjugacy classes
\begin{align}
\operatorname{spec}(M_0)&=\{1,e^{-2\pi i\gamma}\},\\
\operatorname{spec}(M_1)&=\{1,e^{-2\pi i\delta}\},\\
\operatorname{spec}(M_a)&=\{1,e^{-2\pi i\epsilon}\},\\
\operatorname{spec}(M_\infty)&=\{e^{2\pi i\alpha},e^{2\pi i\beta}\},
\end{align}
where the last line uses the local coordinate $t=1/z$ at infinity. The Fuchs relation guarantees compatibility of the determinants with \eqref{eq:monodromy-product}. These local eigenvalues are independent of the accessory parameter $q$; the global connection matrices and hence the conjugacy class of the full representation vary with $q$ \cite{DLMF,Xia2021}.

\subsubsection{Local monodromy types}

When an exponent difference is nonintegral, the corresponding local monodromy is diagonalizable. When an exponent difference is integral, logarithmic Frobenius terms may occur and the local monodromy can acquire a Jordan block. Thus the local possibilities are semisimple nonresonant, scalar or apparent, and resonant logarithmic. The global image $\mathcal M(q)$ is substantially richer: for generic parameters it is irreducible and infinite; at exceptional accessory values it may become reducible or projectively unitary; and if the projective image is finite, the classical classification of finite subgroups of $PGL_2(\mathbb C)$ restricts it to cyclic, dihedral, tetrahedral $A_4$, octahedral $S_4$, or icosahedral $A_5$ type. The statement is a classification of possible finite projective images, not an assertion that every type occurs for every Lam\'e parameter choice.

\begin{proposition}[Accessory dependence of global monodromy]
Fix $a$ and all exponent parameters. Then the local conjugacy classes of $M_0,M_1,M_a,M_\infty$ are fixed, whereas the simultaneous conjugacy class of the tuple
\begin{equation}
(M_0,M_1,M_a,M_\infty)
\end{equation}
varies with the accessory parameter $q$. Consequently, the accessory problem is naturally a Riemann--Hilbert problem: determine $q$ from prescribed global monodromy data.
\end{proposition}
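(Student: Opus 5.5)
The proof has two parts. The first is that the local classes are independent of $q$. The second, which carries the real content, is that the global class genuinely varies with $q$.

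\textbf{Local classes.} At each singular point $s\in\{0,1,a,\infty\}$ the indicial equation of \eqref{eq:heun} involves only $\gamma,\delta,\epsilon,\alpha,\beta$ and not $q$. The reason is that $q$ enters the coefficient of $y$ only through $-q/[z(z-1)(z-a)]$, which has a simple pole at each finite singularity and decays like $z^{-3}$ at infinity. It therefore never reaches the leading $(z-s)^{-2}$ term, or the $t^{-2}$ term at $\infty$. The eigenvalues listed above are thus fixed.

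When an exponent difference is non-integral, $M_s$ is diagonalizable with these eigenvalues, so its conjugacy class is fixed. On the Lam\'e locus the finite differences are all $1/2$. At infinity the difference is $\beta-\alpha=\nu+\tfrac12$. Only in the resonant case at $\infty$ (when $2\nu$ is an odd integer) could a Jordan block appear or disappear as $q$ varies. In that case I would state the claim for the eigenvalue data, in line with the spectral description in the preceding paragraph, and note this as the one caveat.

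\textbf{Variation of the global class.} I would argue by contradiction, using the Riemann--Hilbert uniqueness principle for Fuchsian equations. Fix $a$ and the exponents, and suppose the tuple $(M_0,M_1,M_a,M_\infty)$ were simultaneously conjugate for two values $q\neq q'$. Take fundamental matrices $Y_q$ and $Y_{q'}$ chosen so that their monodromies coincide. Then $G=Y_{q'}Y_q^{-1}$ is single valued on $X_a$. Because the local exponents agree, $G$ has at most moderate growth at every puncture, so $G$ is a rational matrix function.

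To constrain $G$ I would use the irreducibility of generic monodromy, which the paper asserts for generic parameters. The plan is to compare local Frobenius bases at each puncture. In the nonresonant case this gives $G=\Phi_{q'}(z)D\,\Phi_q(z)^{-1}$ locally, with $\Phi$ holomorphic and invertible and $D$ constant. This shows $G$ is holomorphic at each finite puncture and bounded at $\infty$, so by Liouville $G$ is a constant $C$. Then $Y_{q'}=CY_q$, the two operators share a solution space, and so their coefficients coincide. Comparing the $z^0$ coefficient of $z(z-1)(z-a)y''+\dots$ then forces $q=q'$, a contradiction. Equivalently, $q\mapsto[\rho_q]$ is injective, hence nonconstant, into the character variety of the four-punctured sphere with the local classes fixed.

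As an independent sanity check, I would compare $\operatorname{tr}(M_0M_1)$ at two explicit values of $q$. For instance, take a $q$ in the finite polynomial sector, where the monodromy is reducible because of a polynomial solution, and a generic $q$, where it is irreducible. Simultaneously conjugate tuples cannot have one reducible and one irreducible member, so the class must change.

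\textbf{Conclusion and main obstacle.} The Riemann--Hilbert formulation then follows by definition. Since $q$ varies injectively inside a fixed family, recovering $q$ from $[\rho_q]$ is the inverse problem of finding a Fuchsian equation of Heun form with prescribed monodromy.

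The main obstacle is the step showing that $G$ is holomorphic at the punctures. The two equations have identical exponents, but their Frobenius bases could in principle mix the exponent subspaces differently. The local monodromy only forces $D$ to commute with the diagonal local monodromy, so in the nonresonant case $D$ is diagonal and the mixing is harmless. In resonant cases, an apparent singularity or a logarithmic solution at $\infty$, this argument needs separate care. I would fall back on the reducible-versus-irreducible comparison, which is not affected by resonance.
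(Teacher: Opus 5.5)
The paper does not actually prove this proposition. It only notes that the indicial data are independent of $q$, and it cites DLMF and Xia for the claim that the global class moves with $q$. Your Part~1 is exactly that observation. Your Jordan-block caveat is a correct sharpening, since the paper's ``local conjugacy classes'' should be read as eigenvalue data. Your Part~2 is a genuinely different and stronger route: it proves injectivity of $q\mapsto[\rho_q]$ by Riemann--Hilbert uniqueness, and the appeal to generic irreducibility is never used or needed. Two steps, however, fail as written.

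\emph{The Liouville step.} Take $Y$ to be the Wronskian matrix, with rows $(y_1,y_2)$ and $(y_1',y_2')$. The local formula is then really $G=E\,H\,E^{-1}$, where $H=\Phi_{q'}D\,\Phi_q^{-1}$ is holomorphic and invertible. Here $E=\mathrm{diag}(1,(z-s)^{-1})$ at a finite puncture and $E=\mathrm{diag}(1,-z^{-1})$ at $\infty$. At a finite puncture $\Phi(s)$ is upper triangular, so $G$ is holomorphic there as you claim, and moreover $G_{12}=(z-s)H_{12}$ vanishes at $s$. At $\infty$, however, $G_{12}=-zH_{12}$. Let $D=\mathrm{diag}(d_1,d_2)$ relate the Frobenius bases $z^{-\alpha}(1+O(z^{-1}))$ and $z^{-\beta}(1+O(z^{-1}))$, and let $W$ be the Wronskian of the $q$-basis. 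Then
\[
G_{12}=\frac{y_2^{(q')}y_1^{(q)}-y_1^{(q')}y_2^{(q)}}{W}
=\frac{d_2-d_1}{\alpha-\beta}\,z+O(1).
\]
Nothing forces $d_1=d_2$ a priori, so $G$ need not be bounded and Liouville does not apply to it.

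The repair is a degree count, and it is where the three finite singular points are used. $G_{12}$ is entire with linear growth, hence a polynomial of degree at most one. It vanishes at $0$, $1$ and $a$, so $G_{12}\equiv0$. Then $y_j^{(q')}=G_{11}y_j^{(q)}$. Since $G_{11}=H_{11}$ is entire and bounded, it is constant, and comparing the two equations gives $q=q'$. This same step is what justifies ``shared solution space''; from $Y_{q'}=CY_q$ alone you would still need $C_{12}=0$.

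\emph{The resonant fallback fails in exactly the case you single out.} Suppose $2\nu$ is odd. Choose one local exponent at each singular point, with $\rho_\infty$ taken in $t=1/z$. Every such sum $\rho_0+\rho_1+\rho_a+\rho_\infty$ lies in $\tfrac12\Z+\tfrac14$. Hence no solution $z^{\rho_0}(z-1)^{\rho_1}(z-a)^{\rho_a}P(z)$ with $P$ polynomial exists, because its degree would be $-\sum_s\rho_s$. An invariant line would be spanned by such a solution, so no $q$ gives reducible monodromy, and there is no polynomial sector to compare with.

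Injectivity is also unavailable there. Whenever $M_\infty=\lambda I$, one has $\lambda^2=e^{-2\pi i\nu}=-1$. This forces $M_0,M_1$ to be anticommuting involutions, so the tuple is rigid up to conjugacy, and any two $q$ with scalar $M_\infty$ give the same class.

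The missing idea is already in your Part~1. For $\nu+\tfrac12\in\Z\setminus\{0\}$, the log-free condition at $\infty$ is a nonconstant polynomial in $q$, because $q$ enters linearly at each step of the Frobenius recurrence there. So $M_\infty$ is scalar at its finitely many roots and a nontrivial Jordan block otherwise. Simultaneous conjugacy preserves each local class, so the tuple class changes. For $\nu=-\tfrac12$ a logarithm is always present, and your Liouville argument goes through with $D$ taken in the triangular commutant of the Jordan block. The same jump argument covers any nonzero integral exponent difference of a general Heun equation.
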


This is the monodromy-theoretic meaning of the accessory parameter. Isomonodromic sets of Heun accessory parameters have been studied through Painlev\'e VI and tau-function methods \cite{Xia2021}.

\subsection{Covering-space and homotopy background}
\label{sec:coverings}

\subsubsection{The fundamental group of the Heun domain}

The punctured sphere $X_a$ is homotopy equivalent to a wedge of three circles. This can be derived directly by the Seifert--van Kampen theorem; the covering and higher-homotopy statements below use standard surface and covering-space theory \cite{Hatcher,Forster}.

Choose an open set $U$ containing the three finite punctures and a base point, with small deleted disks connected to the base point by disjoint corridors, and let $V$ be a punctured neighborhood of infinity enlarged so that $U\cap V$ is path connected. The loops about the four punctures generate the respective local fundamental groups, while the overlap identifies the boundary word. Van Kampen yields
\begin{equation}
\pi_1(X_a,z_*)
=\left\langle
\gamma_0,\gamma_1,\gamma_a,\gamma_\infty
\;\middle|\;
\gamma_0\gamma_1\gamma_a\gamma_\infty=1
\right\rangle
\cong F_3.
\label{eq:van-kampen-presentation}
\end{equation}
In particular, one may eliminate $\gamma_\infty$ and take $\gamma_0,\gamma_1,\gamma_a$ as free generators.

\begin{figure}[t]
\centering
\begin{tikzpicture}[scale=1.02,>=stealth]
  \draw[rounded corners=18pt,thick] (-4.4,-2.0) rectangle (4.4,2.0);
  \node[fill=loopzero,circle,inner sep=2.6pt,label=below:$0$] (p0) at (-2.7,0) {};
  \node[fill=loopone,circle,inner sep=2.6pt,label=below:$1$] (p1) at (-0.9,0.35) {};
  \node[fill=loopa,circle,inner sep=2.6pt,label=below:$a$] (pa) at (1.2,-0.15) {};
  \node[fill=loopinf,circle,inner sep=2.6pt,label=below:$\infty$] (pi) at (3.2,0.6) {};
  \node[black,circle,fill=black,inner sep=1.8pt,label=above:$z_*$] (b) at (0,-1.25) {};
  \draw[loopzero,thick,->] (b) .. controls (-1.3,-1.2) and (-3.6,-1.0) .. (-3.45,0.25)
       .. controls (-3.35,1.05) and (-2.2,1.05) .. (-2.05,0.2)
       .. controls (-1.95,-0.65) and (-1.8,-0.95) .. (b);
  \draw[loopone,thick,->] (b) .. controls (-0.7,-0.8) and (-1.7,-0.3) .. (-1.55,0.55)
       .. controls (-1.35,1.25) and (-0.35,1.2) .. (-0.2,0.45)
       .. controls (-0.05,-0.2) and (-0.15,-0.7) .. (b);
  \draw[loopa,thick,->] (b) .. controls (0.45,-0.8) and (0.35,-0.15) .. (0.45,0.3)
       .. controls (0.65,1.05) and (1.75,0.95) .. (1.95,0.1)
       .. controls (2.1,-0.65) and (0.9,-0.9) .. (b);
  \draw[loopinf,thick,->] (b) .. controls (1.7,-1.55) and (4.0,-1.3) .. (4.0,0.45)
       .. controls (4.0,1.55) and (2.65,1.7) .. (2.5,0.75)
       .. controls (2.35,-0.1) and (1.5,-0.75) .. (b);
  \node[loopzero] at (-3.8,1.45) {$\gamma_0$};
  \node[loopone] at (-1.3,1.55) {$\gamma_1$};
  \node[loopa] at (1.35,1.45) {$\gamma_a$};
  \node[loopinf] at (3.55,1.55) {$\gamma_\infty$};
\end{tikzpicture}
\caption{Chromatic van Kampen generators on the four-punctured Heun sphere. The colors are retained in subsequent covering graphs: red for $0$, blue for $1$, green for $a$, and purple for $\infty$.}
\label{fig:chromatic-loops}
\end{figure}

\subsubsection{The monodromy covering}

The kernel
\begin{equation}
K_q=\ker\rho_q
\end{equation}
is a normal subgroup of $\pi_1(X_a)$. Let
\begin{equation}
p_q:\widetilde X_q\longrightarrow X_a
\end{equation}
be the connected covering corresponding to $K_q$. By construction, analytic continuation of the Heun solution basis along every closed loop in $\widetilde X_q$ is trivial. Hence the pulled-back local system is globally single valued.

\begin{theorem}[Topology of the monodromy cover]
\label{thm:monodromy-cover}
The monodromy cover satisfies
\begin{equation}
\pi_1(\widetilde X_q)\cong\ker\rho_q.
\end{equation}
If $\mathcal M(q)$ is finite, then $p_q$ is a regular finite-sheeted cover of degree $|\mathcal M(q)|$ with deck group isomorphic to $\mathcal M(q)$. If $\mathcal M(q)$ is infinite, the same construction gives an infinite-sheeted regular cover.

In every case,
\begin{equation}
\pi_n(\widetilde X_q)=0,
\qquad n\ge2.
\label{eq:higher-monodromy-cover}
\end{equation}
\end{theorem}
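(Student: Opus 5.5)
The argument is standard covering-space theory applied to $X_a$, which is a connected, locally path-connected and semilocally simply connected open subset of $\CP^1$, hence a surface.

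First, the Galois correspondence for coverings attaches to the subgroup $K_q=\ker\rho_q\le\pi_1(X_a,z_*)$ a connected covering $p_q\colon\widetilde X_q\to X_a$. Because $p_{q*}$ is injective, it identifies $\pi_1(\widetilde X_q,\tilde z_*)$ with $K_q$, which is the first claim. Normality of $K_q$, as the kernel of a homomorphism, makes the cover regular. Its deck group is then $\pi_1(X_a)/K_q$, and the first isomorphism theorem gives $\pi_1(X_a)/K_q\cong\rho_q(\pi_1(X_a))=\mathcal M(q)$.

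Next, the fibre over $z_*$ is in bijection with the coset space $\pi_1(X_a)/K_q$, so the number of sheets is the index $[\pi_1(X_a):K_q]=|\mathcal M(q)|$. This gives a finite regular cover of degree $|\mathcal M(q)|$ when $\mathcal M(q)$ is finite, and an infinite-sheeted regular cover otherwise. Connectedness holds because we take the component containing a lift of the base point, which is the standard construction as the quotient of the universal cover by $K_q$.

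For the higher homotopy groups, a covering map induces isomorphisms $\pi_n(\widetilde X_q)\cong\pi_n(X_a)$ for $n\ge2$, by the long exact sequence of the fibration with discrete fibre. By the van Kampen discussion, $X_a$ deformation retracts onto a wedge of three circles. The universal cover of a graph is a tree and is contractible, so $\pi_n(X_a)=0$ for $n\ge2$. One can also argue more cheaply: the universal cover of $X_a$ is a simply connected noncompact Riemann surface, which by uniformization is the disk, hence contractible. Either route gives \eqref{eq:higher-monodromy-cover}.

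There is no real obstacle here; every step is a citation of \cite{Hatcher}. The one point needing care is the asphericity of $X_a$, which should be justified by the homotopy equivalence with $\bigvee^3 S^1$ rather than merely by $\pi_1\cong F_3$. I would state the deformation retraction explicitly, contracting the sphere minus four points onto a spine graph, and then invoke contractibility of trees.
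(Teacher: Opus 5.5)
Your proposal is correct and matches the paper's proof: the covering--subgroup correspondence for $\ker\rho_q$ gives the first claims, and asphericity of $X_a$ (homotopy equivalent to a graph, hence with contractible universal cover) gives $\pi_n(\widetilde X_q)=0$ for $n\ge2$. You give more detail than the paper (normality for regularity, the first isomorphism theorem for the deck group, the index for the sheet count). Your covering-induced isomorphism on $\pi_n$ is equivalent to the paper's remark that every connected cover shares the same contractible universal cover.
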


\begin{proof}
The first statement is the covering-subgroup correspondence. Since $X_a$ is homotopy equivalent to a graph, its universal cover is a tree and hence contractible. Every connected cover of $X_a$ has the same universal cover. Therefore every connected monodromy cover is aspherical, which proves \eqref{eq:higher-monodromy-cover}.
\end{proof}

This answers the higher-homotopy problem for unbranched Heun monodromy coverings completely: all higher homotopy groups vanish.

\subsubsection{Finite covers and the Nielsen--Schreier rank}

Let $p:\widetilde X\to X_a$ be a connected unbranched covering of degree $d<\infty$. Since $\pi_1(X_a)\cong F_3$, the corresponding subgroup has index $d$. The Nielsen--Schreier formula gives
\begin{equation}
\operatorname{rank}\pi_1(\widetilde X)
=1+d(3-1)
=2d+1.
\label{eq:nielsen-schreier}
\end{equation}
Thus every connected $d$-sheeted unbranched Heun cover has free fundamental group $F_{2d+1}$, independently of the detailed permutation cycle structure.

\begin{corollary}
For a finite monodromy group $G=\mathcal M(q)$, the kernel monodromy cover has
\begin{equation}
\pi_1(\widetilde X_q)\cong F_{2|G|+1}
\end{equation}
provided the linear monodromy representation itself defines the finite regular deck quotient $G$.
\end{corollary}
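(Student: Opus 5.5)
The plan is to chain three facts already in place: the covering--subgroup correspondence in Theorem~\ref{thm:monodromy-cover}, the index computation from the first isomorphism theorem, and the Nielsen--Schreier count \eqref{eq:nielsen-schreier}.

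First, by Theorem~\ref{thm:monodromy-cover}, $\pi_1(\widetilde X_q)\cong K_q=\ker\rho_q$. The kernel is normal in $\pi_1(X_a,z_*)$, so $p_q$ is a regular cover.

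Second, I will compute the degree. The representation $\rho_q$ surjects onto its image $G=\mathcal M(q)$, so the first isomorphism theorem gives
\begin{equation}
\pi_1(X_a,z_*)/K_q\cong G,
\qquad
[\pi_1(X_a):K_q]=|G|.
\end{equation}
The number of sheets of a connected cover equals the index of the corresponding subgroup, so $p_q$ has degree $d=|G|<\infty$. Its deck group is the normalizer quotient $N(K_q)/K_q=\pi_1(X_a)/K_q\cong G$. This is precisely where the proviso ``the linear monodromy representation itself defines the finite regular deck quotient $G$'' is used. The cover must be built from the kernel of the linear representation $\rho_q$, not from the projective representation. The kernel of the composite $\pi_1\to PGL_2(\C)$ can be strictly larger, and would give the smaller degree $|\mathbb P\mathcal M(q)|$. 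I will remark on this so the statement is not misapplied.

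Third, I will apply \eqref{eq:nielsen-schreier}. By the van Kampen presentation \eqref{eq:van-kampen-presentation}, $\pi_1(X_a)\cong F_3$ has rank $3$. By Nielsen--Schreier, a subgroup of index $d$ in $F_r$ is free of rank $1+d(r-1)$. With $r=3$ and $d=|G|$ this gives
\begin{equation}
\pi_1(\widetilde X_q)\cong K_q\cong F_{1+2|G|}=F_{2|G|+1}.
\end{equation}
As a cross-check, I will verify the rank by Euler characteristic. $X_a$ deformation retracts to a wedge of three circles, so $\chi(X_a)=-2$. Multiplicativity of $\chi$ under finite covers gives $\chi(\widetilde X_q)=-2|G|$. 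Since $\widetilde X_q$ is homotopy equivalent to a connected graph, its fundamental group is free of rank $1-\chi=2|G|+1$.

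No step is computationally hard. The only delicate point is the second step: the degree must be identified with $|G|$ for the linear image and not the projective one, and the cover must be connected, which holds because it corresponds to a subgroup of $\pi_1(X_a,z_*)$. Once that bookkeeping is fixed, the corollary is immediate from Theorem~\ref{thm:monodromy-cover} and \eqref{eq:nielsen-schreier}.
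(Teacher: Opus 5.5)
Your proposal is correct and follows essentially the route the paper intends. The paper gives no separate proof. It states the corollary immediately after \eqref{eq:nielsen-schreier} as a direct consequence of Theorem~\ref{thm:monodromy-cover}, which gives a regular cover of degree $|\mathcal M(q)|$ with $\pi_1(\widetilde X_q)\cong\ker\rho_q$, combined with the rank formula $1+d(3-1)$ at $d=|G|$. Your Euler-characteristic cross-check and your remark distinguishing the linear kernel from the projective one are sound additions but not needed for the argument.
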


\subsubsection{Permutation monodromy and compactified branched covers}

A finite-sheeted topological cover is encoded by a transitive permutation representation
\begin{equation}
\tau:\pi_1(X_a)\longrightarrow S_d.
\end{equation}
Set
\begin{equation}
\sigma_0=\tau(\gamma_0),\quad
\sigma_1=\tau(\gamma_1),\quad
\sigma_a=\tau(\gamma_a),\quad
\sigma_\infty=\tau(\gamma_\infty),
\end{equation}
so that
\begin{equation}
\sigma_0\sigma_1\sigma_a\sigma_\infty=1.
\end{equation}
Let $c(\sigma)$ denote the number of cycles, including fixed points, in a permutation. Filling in the punctures gives a compact branched covering
\begin{equation}
\overline p:\overline X\longrightarrow\mathbb{CP}^1.
\end{equation}
Riemann--Hurwitz yields
\begin{equation}
2-2g
=2d-
\sum_{s\in\{0,1,a,\infty\}}
\bigl(d-c(\sigma_s)\bigr),
\end{equation}
and therefore
\begin{equation}
g
=1+d-\frac12
\sum_{s\in\{0,1,a,\infty\}}c(\sigma_s).
\label{eq:genus-cycle-formula}
\end{equation}
The number of punctures of the uncompactified cover is
\begin{equation}
b=\sum_s c(\sigma_s),
\end{equation}
which is consistent with
\begin{equation}
2g+b-1=2d+1.
\end{equation}

\begin{theorem}[Higher homotopy after compactification]
Let $\overline X$ be the compact Riemann surface determined by a transitive finite permutation monodromy datum. Then:
\begin{enumerate}
\item if $g\ge1$, $\pi_n(\overline X)=0$ for every $n\ge2$;
\item if $g=0$, then $\overline X\cong S^2$, so $\pi_2(\overline X)\cong\mathbb Z$ and $\pi_n(\overline X)\cong\pi_n(S^2)$ for $n\ge3$.
\end{enumerate}
\end{theorem}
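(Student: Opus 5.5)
The plan is to identify $\overline X$ as a closed orientable surface of genus $g$ and then split according to its universal cover. First, the compactification $\overline p:\overline X\to\mathbb{CP}^1$ is a finite branched cover, so $\overline X$ is a compact Riemann surface. It is connected because the permutation datum $\tau$ is transitive, so the unbranched cover $\widetilde X$ is connected. Filling finitely many punctures preserves connectedness, and the complex structure makes $\overline X$ orientable. Its genus is the number $g$ given by \eqref{eq:genus-cycle-formula}; the only role of that formula here is to fix which case we are in. By the classification of closed orientable surfaces, $\overline X$ is homeomorphic to $S^2$ when $g=0$ and to a connected sum of $g$ tori when $g\ge1$.

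For $g\ge1$ we will show that the universal cover of $\overline X$ is contractible. The cleanest route is uniformization: the universal cover of a compact Riemann surface of genus $g\ge1$ is $\C$ (for $g=1$) or the unit disk (for $g\ge2$), and both are contractible. A purely topological alternative also works. The torus $\R^2/\Z^2$ has universal cover $\R^2$. For $g\ge2$, the standard $4g$-gon identification tiles the hyperbolic plane, or we may use the CW structure with one $0$-cell, $2g$ one-cells and one $2$-cell, whose universal cover is a simply connected contractible planar complex. The lifting property of covering maps gives $\pi_n(\overline X)\cong\pi_n(\widehat X)$ for $n\ge2$, where $\widehat X$ is the universal cover. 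This is the same argument used in the proof of Theorem~\ref{thm:monodromy-cover}, so part (1) follows.

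For $g=0$, the surface is homeomorphic to $S^2$, so all homotopy groups agree with those of the sphere. Hurewicz, or the Hopf fibration long exact sequence, gives $\pi_2(S^2)\cong H_2(S^2)\cong\Z$. The groups $\pi_n$ for $n\ge3$ are just $\pi_n(S^2)$ by homotopy invariance; in particular $\pi_3(S^2)\cong\Z$ by the Hopf fibration, and we do not need to compute them further.

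The main obstacle is justifying that the compactification really is a closed surface of genus $g$, namely that filling the punctures is well defined. Over each puncture $s$, the preimage of a small punctured disk is a disjoint union of punctured disks. There is one for each cycle of $\sigma_s$, and a cycle of length $\ell$ gives the cover $w\mapsto w^\ell$. Filling in these disks yields a surface, which we then compare with the Riemann--Hurwitz count already recorded. Once this is in place, the rest is standard algebraic topology.
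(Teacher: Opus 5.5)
Your proof is correct and takes essentially the same route as the paper, whose argument appears with the restatement in Proposition~\ref{prop:nonvanishing-higher}. There, genus $g\ge1$ is handled by the contractible universal cover (the plane for $g=1$, the hyperbolic disk for $g\ge2$), and $g=0$ gives $\overline X\cong S^2$. Your extra check that filling the punctures through the local models $w\mapsto w^\ell$ yields a connected, closed, orientable surface goes beyond the paper, which takes the compactification as given in the hypothesis.
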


The genus-one Lam\'e elliptic curve \eqref{eq:elliptic-double-cover} belongs to case (i), so its higher homotopy groups vanish despite its nontrivial fundamental lattice $\mathbb Z^2$.

\subsubsection{Chromatic Schreier designs}

The permutation monodromy can be visualized by a colored Schreier graph. Its vertices are the sheets $1,\ldots,d$. For each generator $\gamma_s$, draw an edge of the color assigned to the puncture $s$ from sheet $j$ to $\sigma_s(j)$. Connectedness of the graph is equivalent to transitivity of the covering action, while monochromatic cycles reproduce the cycle decomposition entering \eqref{eq:genus-cycle-formula}.

As an illustrative four-sheet quotient, take
\begin{equation}
\sigma_0=(12)(34),\qquad
\sigma_1=(13)(24),\qquad
\sigma_a=(14)(23),\qquad
\sigma_\infty=1.
\label{eq:klein-permutation-example}
\end{equation}
The first three permutations generate the Klein four-group and obey the required product relation. Their chromatic Schreier design is shown in Fig.~\ref{fig:schreier}. Since
\begin{equation}
c(\sigma_0)=c(\sigma_1)=c(\sigma_a)=2,
\qquad
c(\sigma_\infty)=4,
\end{equation}
formula \eqref{eq:genus-cycle-formula} gives $g=0$ for the compactification, while the punctured four-sheet cover has free fundamental group of rank $9$.

\begin{figure}[t]
\centering
\begin{tikzpicture}[scale=1.25,>=stealth]
  \node[circle,draw,thick,minimum size=8mm] (v1) at (0,1.6) {$1$};
  \node[circle,draw,thick,minimum size=8mm] (v2) at (2.8,1.6) {$2$};
  \node[circle,draw,thick,minimum size=8mm] (v3) at (0,0) {$3$};
  \node[circle,draw,thick,minimum size=8mm] (v4) at (2.8,0) {$4$};
  \draw[loopzero,line width=1.4pt] (v1)--(v2);
  \draw[loopzero,line width=1.4pt] (v3)--(v4);
  \draw[loopone,line width=1.4pt] (v1)--(v3);
  \draw[loopone,line width=1.4pt] (v2)--(v4);
  \draw[loopa,line width=1.4pt] (v1)--(v4);
  \draw[loopa,line width=1.4pt] (v2)--(v3);
  \draw[loopinf,line width=1.2pt,->] (v1) edge[loop above] node[above] {$\infty$} (v1);
  \draw[loopinf,line width=1.2pt,->] (v2) edge[loop above] (v2);
  \draw[loopinf,line width=1.2pt,->] (v3) edge[loop below] (v3);
  \draw[loopinf,line width=1.2pt,->] (v4) edge[loop below] (v4);
  \node[loopzero] at (1.4,1.92) {$0$};
  \node[loopone] at (-0.25,0.8) {$1$};
  \node[loopa] at (1.85,0.95) {$a$};
\end{tikzpicture}
\caption{Chromatic Schreier graph for the four-sheet permutation datum \eqref{eq:klein-permutation-example}. Red, blue, green, and purple encode continuation around $0,1,a,\infty$, respectively. The graph simultaneously records connectivity, cycle data, and the generators used in the van Kampen presentation.}
\label{fig:schreier}
\end{figure}

The construction generalizes immediately to any finite quotient of the linear or projective Heun monodromy group. In computational work it supplies a compact visual invariant of the chosen finite continuation quotient and gives the cycle counts needed for genus and puncture calculations.

\subsubsection{Geometric interpretation of the orbifold and higher homotopy}
\label{subsec:orbifold-homotopy-geometry}

The algebraic statements above admit a useful geometric interpretation.  In particular, the
Lam\'e specialization does not replace the punctured sphere by an arbitrary new surface; it
endows the four-punctured sphere with order-two local projective isotropy.  The resulting
orbifold has signature $(0;2,2,2,2)$ and is commonly visualized as a pillowcase.  A small
neighborhood of each marked point is modeled on a disk modulo the involution
$z\mapsto-z$.  Figure~\ref{fig:pillowcase-orbifold} displays this local-to-global picture.

\begin{figure}[t]
\centering
\begin{tikzpicture}[scale=1.02,>=stealth]
  \path[draw=black,thick,fill=blue!4,rounded corners=14pt]
    (-3.4,-1.65) .. controls (-3.75,-0.55) and (-3.75,0.55) .. (-3.4,1.65)
    -- (3.4,1.65)
    .. controls (3.75,0.55) and (3.75,-0.55) .. (3.4,-1.65)
    -- cycle;
  \draw[dashed,gray] (0,-1.65)--(0,1.65);

  \node[fill=loopzero,circle,inner sep=2.7pt,label=above left:$0$]  at (-2.25,0.92) {};
  \node[fill=loopone,circle,inner sep=2.7pt,label=above right:$1$] at (2.25,0.92) {};
  \node[fill=loopa,circle,inner sep=2.7pt,label=below left:$a$]    at (-2.25,-0.92) {};
  \node[fill=loopinf,circle,inner sep=2.7pt,label=below right:$\infty$] at (2.25,-0.92) {};
  \node at (0,0.1) {$\Gamma_{2,2,2,2}$};
  \node at (0,-0.35) {pillowcase orbifold};

  \draw[->,thick] (3.75,0.45)--(5.0,0.45);
  \begin{scope}[shift={(6.25,0.45)}]
    \draw[thick,fill=gray!4] (0,0) circle (1.05);
    \draw[thick] (-1.05,0)--(1.05,0);
    \draw[->,thick] (0,0)--(0.82,0);
    \draw[->,thick] (0,0)--(-0.82,0);
    \fill[black] (0,0) circle (1.8pt);
    \node at (0,-1.45) {$\mathbb D/\mathbb Z_2$};
    \node at (0,1.35) {local cone chart};
  \end{scope}
\end{tikzpicture}
\caption{The Lam\'e projective monodromy orbifold of signature $(0;2,2,2,2)$.  The four colored points are order-two orbifold points.  Locally, each is modeled by the quotient of a disk under the involution $z\mapsto-z$.  The corresponding elliptic curve is an unbranched orbifold double cover of this pillowcase.}
\label{fig:pillowcase-orbifold}
\end{figure}

The orbifold fundamental group is therefore
\begin{equation}
\Gamma_{2,2,2,2}
=
\left\langle
r_0,r_1,r_a,r_\infty
\;\middle|\;
r_0^2=r_1^2=r_a^2=r_\infty^2=1,
\quad r_0r_1r_ar_\infty=1
\right\rangle.
\label{eq:pillowcase-presentation-visual}
\end{equation}
Its index-two translation subgroup is isomorphic to $\mathbb Z^2$ and is the fundamental
group of the elliptic double cover.  Thus the passage from the four-punctured sphere to the
Lam\'e elliptic curve is simultaneously analytic, algebraic, and orbifold-theoretic.

\paragraph{Fundamental group versus higher homotopy groups.}

The phrase ``higher-order fundamental group'' is best replaced by \emph{higher homotopy
group}.  The fundamental group $\pi_1$ classifies based loops, whereas $\pi_n$, $n\ge2$,
classifies based maps of the $n$-sphere into the space.  Figure~\ref{fig:pi1-pi2-comparison}
shows the distinction for $\pi_1$ and $\pi_2$.

\begin{figure}[t]
\centering
\begin{tikzpicture}[scale=0.95,>=stealth]
  \begin{scope}[shift={(-4.7,0)}]
    \draw[thick,fill=green!4,rounded corners=10pt] (-2.2,-1.65) rectangle (2.2,1.65);
    \fill[black] (-1.45,-1.0) circle (1.8pt);
    \node[below] at (-1.45,-1.0) {$x_*$};
    \fill[loopzero] (0.65,0.35) circle (2.5pt);
    \node[above] at (0.65,0.35) {puncture};
    \draw[loopone,very thick,->]
      (-1.45,-1.0) .. controls (-0.6,-1.2) and (1.65,-1.0) .. (1.55,0.35)
      .. controls (1.45,1.2) and (-0.1,1.25) .. (-0.2,0.35)
      .. controls (-0.25,-0.4) and (-0.85,-0.85) .. (-1.45,-1.0);
    \node at (0,-2.1) {$\pi_1$: based loops};
  \end{scope}

  \draw[->,thick] (-1.8,0)--(-0.8,0);
  \node at (-1.3,0.38) {dimension};

  \begin{scope}[shift={(3.6,0)}]
    \shade[ball color=orange!25] (0,0.1) circle (1.45);
    \draw[thick] (0,0.1) circle (1.45);
    \fill[black] (0,-1.35) circle (1.8pt);
    \node[below] at (0,-1.35) {$*$};
    \node at (0,-2.1) {$\pi_2$: based spheres};
    \node at (0,1.95) {$S^2\longrightarrow Y$};
  \end{scope}
\end{tikzpicture}
\caption{The fundamental group $\pi_1$ records homotopy classes of loops and is generally non-abelian.  The first higher homotopy group $\pi_2$ records homotopy classes of based maps from $S^2$ and is abelian.  Higher groups $\pi_n$, $n\ge2$, probe genuinely higher-dimensional spherical topology.}
\label{fig:pi1-pi2-comparison}
\end{figure}

For the Heun domain
\begin{equation}
X_a=\mathbb{CP}^1\setminus\{0,1,a,\infty\},
\end{equation}
Seifert--van Kampen gives $\pi_1(X_a)\cong F_3$.  On the other hand, $X_a$ is homotopy
equivalent to a graph, hence is aspherical.  Therefore
\begin{equation}
\pi_n(X_a)=0,
\qquad n\ge2.
\label{eq:punctured-higher-vanish}
\end{equation}
The same conclusion holds for every connected unbranched monodromy cover
$\widetilde X_q\to X_a$, as already proved in Theorem~\ref{thm:monodromy-cover}.

\paragraph{Criterion for nonvanishing higher homotopy.}

The relevant structural condition is failure of asphericity.  A connected space $Y$ is
aspherical when its universal cover is contractible, equivalently
\begin{equation}
\pi_n(Y)=0,
\qquad n\ge2.
\end{equation}
Thus a higher homotopy group can be nonzero only when the universal cover carries
nontrivial higher-dimensional homotopy.  In the two-dimensional compactified covering
problems considered here, this criterion becomes especially sharp.

\begin{proposition}[Nonvanishing criterion for compactified Heun covers]
\label{prop:nonvanishing-higher}
Let $\overline X$ be a connected compact Riemann surface obtained by compactifying a finite
permutation-monodromy cover of $X_a$, and let $g$ be its genus as determined by
\eqref{eq:genus-cycle-formula}.  Then
\begin{equation}
\pi_n(\overline X)=0,
\qquad n\ge2,
\end{equation}
if $g\ge1$.  If $g=0$, then $\overline X\cong S^2$ and
\begin{equation}
\pi_2(\overline X)\cong\mathbb Z,
\end{equation}
while for $n\ge3$ one has
\begin{equation}
\pi_n(\overline X)\cong\pi_n(S^2),
\end{equation}
which is nonzero for infinitely many $n$.
\end{proposition}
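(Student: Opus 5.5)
The plan is to reduce everything to the classification of closed orientable surfaces and to the behaviour of homotopy groups under coverings.

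\textbf{Setup.} By the construction preceding \eqref{eq:genus-cycle-formula}, $\overline X$ is obtained from a connected finite unbranched cover of $X_a$ by filling in finitely many punctures. Each puncture has a neighbourhood biholomorphic to a punctured disk, so $\overline X$ is a compact connected Riemann surface, hence a closed orientable topological surface. Its genus is the one computed from the Riemann--Hurwitz count \eqref{eq:genus-cycle-formula}. By the classification of surfaces, $\overline X$ is homeomorphic to $S^2$ when $g=0$ and to the connected sum $\Sigma_g$ of $g$ tori when $g\ge1$.

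\textbf{Case $g\ge1$.} Here we want to show that the universal cover of $\overline X$ is contractible. By uniformization, the universal cover of a compact Riemann surface of genus $g\ge1$ is $\C$ when $g=1$ and the unit disk $\mathbb D$ when $g\ge2$; both are contractible. A purely topological alternative is also available: $\Sigma_g$ is the quotient of $\R^2$ (for $g=1$) or of the hyperbolic plane (for $g\ge2$) by a free, properly discontinuous action. In either form, the long exact sequence of a covering gives $\pi_n(\overline X)\cong\pi_n(\widetilde{\overline X})=0$ for $n\ge2$. This is the same asphericity mechanism used in Theorem~\ref{thm:monodromy-cover}, except that the contractible universal cover is now a plane rather than a tree. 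For the Lam\'e elliptic curve, the universal cover is explicitly $\C$ with deck group $\Lambda\cong\Z^2$.

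\textbf{Case $g=0$.} Here $\overline X\cong S^2$, so trivially $\pi_n(\overline X)\cong\pi_n(S^2)$ for all $n$. The statement $\pi_2(S^2)\cong\Z$ follows from the Hopf fibration $S^1\to S^3\to S^2$. Its long exact sequence gives $\pi_2(S^2)\cong\pi_1(S^1)\cong\Z$, and also $\pi_n(S^2)\cong\pi_n(S^3)$ for $n\ge3$. (Alternatively, $\pi_2(S^2)\cong\Z$ follows from Hurewicz.)

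\textbf{Main obstacle: infinitely many nonzero $\pi_n(S^2)$.} This is the only nontrivial step. I would first try the Hopf isomorphism $\pi_n(S^2)\cong\pi_n(S^3)$ for $n\ge3$ and then quote Serre's result that $\pi_n(S^3)$ is nonzero for infinitely many $n$. This can be cited as a theorem, for instance Serre's proof that $\pi_n(S^3)$ has nontrivial $p$-torsion, first appearing in degree $2p$, for every prime $p$. If an elementary argument is preferred, one could instead exhibit explicit nonzero classes. Candidates are the Hopf map $\eta\in\pi_3(S^2)\cong\Z$, its composites $\eta\circ\Sigma\eta\in\pi_4(S^2)$ and $\pi_5(S^2)\cong\Z_2$, and so on. However, proving that infinitely many such composites are essential is genuinely hard, so I would rely on the citation rather than attempt a self-contained proof.
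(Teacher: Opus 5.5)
Your proposal is correct and follows the paper's argument: for $g\ge1$ you use asphericity via the contractible universal cover ($\C$ or the disk), and for $g=0$ you identify $\overline X\cong S^2$. You go slightly further than the paper's proof, which only appeals to ``the classical sphere homotopy groups'': you justify $\pi_2(S^2)\cong\Z$ through the Hopf fibration, and you justify the ``nonzero for infinitely many $n$'' clause via $\pi_n(S^2)\cong\pi_n(S^3)$ together with Serre's $\Z/p\subset\pi_{2p}(S^3)$.
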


\begin{proof}
A closed orientable surface of genus $g\ge1$ has contractible universal cover: the Euclidean
plane for $g=1$ and the hyperbolic disk for $g\ge2$.  Hence it is aspherical.  For $g=0$ the
surface is $S^2$, whose second homotopy group is $\mathbb Z$ and whose higher homotopy
groups are the classical sphere homotopy groups.
\end{proof}

Consequently, within the present Heun covering theory the first obstruction to vanishing is
not merely the presence of branching; it is whether compactification and branching change
the covering surface into a non-aspherical topology.  For compact Riemann surfaces, this
happens precisely at genus zero.  The genus-one Lam\'e elliptic cover remains aspherical even
though its fundamental group is nontrivial:
\begin{equation}
\pi_1(E_a)\cong\mathbb Z^2,
\qquad
\pi_n(E_a)=0,
\quad n\ge2.
\end{equation}
By contrast, the four-sheet example in \eqref{eq:klein-permutation-example} compactifies to
genus zero; hence its compactification has $\pi_2\cong\mathbb Z$.

Figure~\ref{fig:aspherical-versus-sphere} summarizes the distinction.

\begin{figure}[t]
\centering
\begin{tikzpicture}[scale=1.0]
  \begin{scope}[shift={(-4.1,0)}]
    \draw[thick,fill=blue!6] (0,0) ellipse (2.0 and 1.15);
    \draw[thick,fill=white] (0.45,0) ellipse (0.75 and 0.40);
    \node at (0,-1.75) {$g\ge1$: aspherical};
    \node at (0,-2.2) {$\pi_n=0$ for $n\ge2$};
  \end{scope}

  \draw[<->,thick] (-1.45,0)--(1.45,0);
  \node at (0,0.42) {compactification};
  \node at (0,-0.42) {changes genus};

  \begin{scope}[shift={(4.1,0)}]
    \shade[ball color=orange!25] (0,0) circle (1.45);
    \draw[thick] (0,0) circle (1.45);
    \node at (0,-1.75) {$g=0$: sphere};
    \node at (0,-2.2) {$\pi_2\cong\mathbb Z$};
  \end{scope}
\end{tikzpicture}
\caption{Higher homotopy after compactification.  Compact Riemann surfaces of genus $g\ge1$ are aspherical, whereas genus zero gives the sphere and a nonvanishing second homotopy group.}
\label{fig:aspherical-versus-sphere}
\end{figure}

This distinction is useful for the spectral problem.  The monodromy representation is governed
primarily by $\pi_1$, because analytic continuation follows loops.  The higher homotopy groups
do not supply additional ordinary monodromy matrices for a second-order Fuchsian equation;
rather, they describe the global topology of compactified continuation surfaces.  They become
relevant when one studies families of covers, compactifications, topological transitions, or
higher-geometric structures attached to the Heun local system.

\section{Main Results}
\label{sec:main-results}

The results in this section are the contributions developed from the Lam\'e--Heun framework
of Section~\ref{sec:preliminaries}. Established theorems are cited where they are invoked; the
statements below concern the specific parameter, spectral, asymptotic, and monodromy
consequences derived for the Lam\'e locus.

\subsection{Lam\'e-preserving Heun parameter symmetries}
\label{sec:symmetry}
For brevity write $p_0=\alpha\beta$. Two generators are
\begin{align}
s:&\quad (a,q)\mapsto(1-a,p_0-q),\label{eq:sgen}\\
t:&\quad (a,q)\mapsto(a^{-1},q/a).\label{eq:tgen}
\end{align}
They satisfy $s^2=t^2=(st)^3=1$.

\begin{theorem}[Explicit Lam\'e-preserving anharmonic orbit]
For fixed $\alpha,\beta$ and $\gamma=\delta=\epsilon=1/2$, the six $S_3$ parameter transforms are
\begin{center}
\begin{tabular}{ccc}
\toprule
No. & $a'$ & $q'$\\
\midrule
1 & $a$ & $q$\\
2 & $1-a$ & $p_0-q$\\
3 & $1/a$ & $q/a$\\
4 & $1/(1-a)$ & $(p_0-q)/(1-a)$\\
5 & $(a-1)/a$ & $(ap_0-q)/a$\\
6 & $a/(a-1)$ & $(ap_0-q)/(a-1)$\\
\bottomrule
\end{tabular}
\end{center}
Each transformed equation is again a Lam\'e-type Heun equation with finite exponent parameters $1/2,1/2,1/2$.
\end{theorem}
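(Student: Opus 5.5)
The approach is to realize the two generators \eqref{eq:sgen} and \eqref{eq:tgen} as explicit Möbius changes of the independent variable that permute the finite singularities $\{0,1,a\}$ and fix $\infty$. We then verify directly that each one carries the Heun equation \eqref{eq:heun} on the Lam\'e locus $\mathcal L$ to another Heun equation on $\mathcal L$. The six table entries follow as the words in $s,t$, using the $S_3$ relations $s^2=t^2=(st)^3=1$.

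\textbf{Step 1 (generator $s$).} Substitute $z=1-w$. The singular set $\{0,1,a,\infty\}$ becomes $\{1,0,1-a,\infty\}$, and $\infty$ stays fixed, so the exponents $\alpha,\beta$ there are unchanged. The coefficient of $Y'$ becomes
\begin{equation}
\frac{\delta}{w}+\frac{\gamma}{w-1}+\frac{\epsilon}{w-(1-a)}.
\end{equation}
With $\gamma=\delta=\epsilon=\tfrac12$, the swap $\gamma\leftrightarrow\delta$ is invisible. For the potential term, write $z(z-1)(z-a)=-w(w-1)(w-(1-a))$. Then
\begin{equation}
-(\alpha\beta(1-w)-q)=\alpha\beta w-(p_0-q),
\end{equation}
which gives $a'=1-a$ and $q'=p_0-q$.

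\textbf{Step 2 (generator $t$).} Substitute $z=aw$. The singularities $0,1,a$ go to $0,1/a,1$, and $\infty$ is fixed. The residues $\gamma/w$, $\epsilon/(w-1)$, $\delta/(w-1/a)$ appear, again all equal to $\tfrac12$. In the potential term, the factor $a^2$ from $\dd z^2$ combines with $z(z-1)(z-a)=a^3w(w-1/a)(w-1)$ to leave
\begin{equation}
\frac{\alpha\beta w-q/a}{w(w-1)(w-1/a)}.
\end{equation}
This gives $a'=1/a$ and $q'=q/a$.

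Because no gauge factor is used and $\infty$ is fixed in both steps, the Fuchs relation and the exponents $\alpha,\beta$ are preserved. So each image lies in $\mathcal L$, which proves the final sentence of the statement for the generators, and hence for all compositions.

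\textbf{Step 3 (group relations and table).} On the $a$-coordinate, $s$ and $t$ are the involutions $a\mapsto 1-a$ and $a\mapsto 1/a$, which generate the anharmonic group. For the relations $s^2=t^2=(st)^3=1$ on pairs $(a,q)$, the only nontrivial check is $(st)^3$. I would compute $st$ once: $t$ gives $(1/a,q/a)$, then $s$ gives $(1-1/a,\,p_0-q/a)$, which is entry 5. Applying this map three times returns $(a,q)$ after a short check.

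The remaining entries come from composing the maps, always acting on the current $(a',q')$:
\begin{itemize}
\item entry 4 is $ts$: $(1-a,p_0-q)\mapsto\bigl(1/(1-a),\,(p_0-q)/(1-a)\bigr)$;
\item entry 6 is $sts$ (equivalently $tst$): applying $t$ to entry 5 gives $a'=a/(a-1)$ and $q'=(p_0-q/a)\cdot a/(a-1)=(ap_0-q)/(a-1)$.
\end{itemize}

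The main obstacle is bookkeeping of composition order, since the maps act on the transformed parameters. A wrong order can attach the correct $a'$ to the wrong $q'$. I would guard against this by checking each $q'$ against the invariant combination given by the affine-in-$q$ rescaling rule $q'=\lambda q+\mu$, where $\lambda$ is the Möbius scale factor.
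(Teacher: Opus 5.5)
Your proposal is correct and follows the same route as the paper. The paper also obtains $s$ and $t$ by substituting $z\mapsto 1-z$ and $z\mapsto z/a$ into the Heun equation, and then reads off the table from their compositions. Your version makes explicit what the paper leaves implicit: the identity $z(z-1)(z-a)=-w(w-1)(w-(1-a))$, the fact that the permutation of $\gamma,\delta,\epsilon$ is invisible on the Lam\'e locus, and the check of $(st)^3=1$ together with the composition order for entries 4--6.
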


\begin{proof}
The transformations \eqref{eq:sgen} and \eqref{eq:tgen} follow by direct substitution of $z\mapsto1-z$ and $z\mapsto z/a$ into \eqref{eq:heun}. Their compositions generate the six elements of $S_3$, giving the table.
\end{proof}

Substituting \eqref{eq:lame-heun-parameters}, so that $p_0=-\nu(\nu+1)/4$, produces explicit transformations of the modulus and Lam\'e spectral parameter. Write $m=k^2$. Since $a'=1/m'$ and $q'=-h'/(4m')$, one obtains
\begin{center}
\begin{tabular}{ccc}
\toprule
No. & $m'$ & $h'$\\
\midrule
1 & $m$ & $h$\\
2 & $\displaystyle\frac{m}{m-1}$ & $\displaystyle\frac{-h+m\nu(\nu+1)}{m-1}$\\[2mm]
3 & $1/m$ & $h/m$\\[1mm]
4 & $1-1/m$ & $\nu(\nu+1)-h/m$\\[1mm]
5 & $1/(1-m)$ & $\displaystyle\frac{h-\nu(\nu+1)}{m-1}$\\[2mm]
6 & $1-m$ & $-h+\nu(\nu+1)$\\
\bottomrule
\end{tabular}
\end{center}
These maps should be understood as analytic parameter equivalences; they need not preserve the real physical interval $0<m<1$.

\begin{theorem}[Orbit stratification]
The anharmonic orbit of a generic singularity parameter $a\in\C\setminus\{0,1\}$ contains six distinct values
\begin{equation}
a,\quad 1-a,\quad a^{-1},\quad (1-a)^{-1},\quad (a-1)/a,\quad a/(a-1).
\end{equation}
The orbit has three elements at the harmonic values
\begin{equation}
a\in\{-1,\tfrac12,2\},
\end{equation}
and two elements at the equianharmonic values
\begin{equation}
a=\frac{1\pm i\sqrt3}{2}.
\end{equation}
Hence the corresponding stabilizers in $S_3$ have orders $1$, $2$, and $3$, respectively.
\end{theorem}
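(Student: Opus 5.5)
The plan is to apply the orbit--stabilizer theorem to the action of $S_3$ on $\C\setminus\{0,1\}$ by the six Möbius maps in the first table of the preceding theorem, namely
\begin{equation}
f_1(a)=a,\quad f_2(a)=1-a,\quad f_3(a)=a^{-1},\quad f_4(a)=(1-a)^{-1},\quad f_5(a)=(a-1)/a,\quad f_6(a)=a/(a-1).
\end{equation}
First I will record that this really is a group action on $\C\setminus\{0,1\}$. Each $f_i$ permutes $\{0,1,\infty\}$, so each maps $\C\setminus\{0,1\}$ to itself. The maps are closed under composition, being generated by $s:a\mapsto 1-a$ and $t:a\mapsto 1/a$ with $s^2=t^2=(st)^3=1$. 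Since $|S_3|=6$, the orbit of $a$ has size $6/|\mathrm{Stab}(a)|$, and the stabilizer is a subgroup of order $1$, $2$, $3$ or $6$.

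The main step is to compute all fixed points of the nonidentity elements, each equation solved in $\C\setminus\{0,1\}$.
\begin{itemize}
\item $1-a=a$ gives $a=\tfrac12$.
\item $1/a=a$ gives $a=\pm1$; excluding $1$ leaves $a=-1$.
\item $a/(a-1)=a$ gives $a^2-2a=0$, so $a=2$ (excluding $0$).
\item The two order-three elements $f_4,f_5$ (which are inverse to each other) both reduce to $a^2-a+1=0$, so $a=(1\pm i\sqrt3)/2$.
\end{itemize}
Hence for $a$ outside $\{-1,\tfrac12,2,(1\pm i\sqrt3)/2\}$ the stabilizer is trivial, and the six listed values are distinct. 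This is the generic case.

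For a harmonic value, each of $-1,\tfrac12,2$ is fixed by exactly one involution and by no order-three element, so the stabilizer has order $2$ and the orbit has three elements. I will check directly that the orbit of $-1$ is $\{-1,2,\tfrac12\}$. For an equianharmonic value $\omega=(1+i\sqrt3)/2$, the stabilizer contains the order-three subgroup $A_3$. It is not all of $S_3$, since no involution fixes $\omega$, so the orbit is $\{\omega,\bar\omega\}$ with two elements. Here $1-\omega=\bar\omega=1/\omega$.

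Finally I will note that no point has a stabilizer of order $6$, because the fixed-point sets of an involution and of an order-three element are disjoint. The orders $1$, $2$, $3$ therefore exhaust the cases, as claimed. None of the steps is genuinely hard. The only care needed is to discard the spurious roots $0$ and $1$ and to confirm the distinctness claim in the generic case, and that distinctness follows immediately from the fixed-point computation.
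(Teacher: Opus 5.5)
Your proposal is correct and follows the same route as the paper: the orbit--stabilizer theorem for the $S_3$ action by the six cross-ratio maps, with the exceptional orbits found by solving the fixed-point equations of the three involutions and the two $3$-cycles. Your version makes explicit the details the paper leaves implicit. These are discarding the spurious roots $0,1$, checking that each harmonic value is fixed by exactly one involution, and noting that the fixed-point sets are disjoint, which rules out a stabilizer of order $6$.
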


\begin{proof}
The statement follows from the orbit--stabilizer theorem applied to the six cross-ratio transforms. Coincidences occur precisely when $a$ is fixed by a nontrivial transposition or a $3$-cycle. Solving the resulting fixed-point equations gives the harmonic and equianharmonic sets above.
\end{proof}

For a real Jacobi modulus $0<m<1$, the harmonic point inside the physical interval is $m=1/2$, corresponding to $a=2$. The equianharmonic values are necessarily complex.

\begin{corollary}[Spectral covariance]
Suppose $h$ is an admissible Lam\'e spectral value at modulus $m$ for a boundary or monodromy problem transported by one of the six transformations above. Then the corresponding transformed value $h'$ in the table is admissible for the transformed modulus $m'$, provided the boundary or monodromy data are transported by the same change of variable.
\end{corollary}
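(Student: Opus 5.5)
The plan is to realize each of the six parameter maps by an explicit change of independent variable that preserves the Lam\'e--Heun form. Then I transport solutions, and with them the boundary or monodromy conditions, along that change of variable.

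\emph{Step 1: the generators.} I start with the two generators \eqref{eq:sgen} and \eqref{eq:tgen}. For $s$, substitute $z=1-w$ into \eqref{eq:heun}. The coefficient of $Y'$ becomes $\gamma/w+\delta/(w-1)+\epsilon/(w-(1-a))$ with $\gamma=\delta=\epsilon=\tfrac12$, and the potential term becomes $(\alpha\beta w-(p_0-q))/(w(w-1)(w-1+a))$. For $t$, substitute $z=aw$; after clearing the factor $a^2$ one obtains singularities $0,1,1/a$ and accessory parameter $q/a$. In both cases $\alpha,\beta$ and the finite exponents $\tfrac12$ are unchanged, so $Y(z)$ solves the original equation if and only if $Y(1-w)$ or $Y(aw)$ solves the transformed one.

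\emph{Step 2: all six maps.} Composing the substitutions gives a M\"obius map $w\mapsto z=T(w)$ for each row of the table. Each such map permutes $\{0,1,a\}$ with $\{0,1,a'\}$ and fixes $\infty$, since all six are affine in $z$. Using $a'=1/m'$ and $q'=-h'/(4m')$, this gives a bijection between solutions at $(m,h)$ and solutions at $(m',h')$.

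\emph{Step 3: transporting the conditions.} An admissible $h$ means there is a nonzero solution satisfying prescribed local conditions. These are exponent choices ($0$ or $\tfrac12$) at specified singular points, which by the discussion after \eqref{eq:zmap-complete} encode the endpoint conditions \eqref{eq:lame-bcs}, or else a prescribed conjugacy class of the monodromy tuple. Because $T$ is biholomorphic near each singular point, it maps local Frobenius exponents at $z_0$ to identical exponents at $T^{-1}(z_0)$. Since $T$ is affine, no gauge factor is needed, so exponent-type conditions transport verbatim with the singularities relabelled. For monodromy, $T$ induces an isomorphism $\pi_1(X_{a'})\to\pi_1(X_a)$ that sends the puncture loops to puncture loops. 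Pulling back the local system gives $\rho_{q'}=\rho_q\circ T_*$ up to conjugation, so any monodromy condition, once the loops are transported by the same map, holds for $h'$ exactly when it holds for $h$. This is the meaning of the hypothesis ``transported by the same change of variable.''

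\emph{Main obstacle.} The step I expect to require care is the boundary-value reading of Step 3. The original conditions live on the real interval $[0,K(m)]$ in the $u$-variable, but $m'$ may leave $(0,1)$, and then the transported problem is not a self-adjoint problem of Theorem~\ref{thm:finite-complete}. I would therefore state admissibility intrinsically on the $z$-sphere: as the existence of a solution that is a Frobenius solution with a prescribed exponent at two prescribed singular points (a connection condition). Transport is then purely complex-analytic, and I would not claim that self-adjointness or reality is preserved.
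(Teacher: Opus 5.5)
Your proposal is correct and matches the paper's argument: the paper obtains the six rows by composing the substitutions $z\mapsto 1-z$ and $z\mapsto z/a$, which send solutions to solutions while keeping the exponents $\tfrac12,\tfrac12,\tfrac12$ and $\alpha,\beta$, and the corollary then follows by carrying the boundary or monodromy data along the same affine map, as in your Steps 1--3. Your caveat, that admissibility must be read as a complex connection or monodromy condition rather than a fixed real self-adjoint problem when $m'\notin(0,1)$, is the same point made in the paper's remark after the corollary.
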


\begin{remark}
The corollary is a covariance statement, not an assertion that a fixed real boundary-value problem has identical spectra at all six parameter values. The domain and boundary data must be transformed together with the differential equation.
\end{remark}

\subsection{Finite polynomial sectors and accessory spectral polynomials}
\label{sec:qes}

The Heun embedding also yields a finite-dimensional spectral problem. Multiply \eqref{eq:heun} by $z(z-1)(z-a)$ and write
\begin{align}
\mathcal H={}&[z^3-(1+a)z^2+az]\partial_z^2
\nonumber\\
&+[Sz^2-Tz+a\gamma]\partial_z+\alpha\beta z-q,
\label{eq:heun-poly-operator}
\end{align}
where
\begin{equation}
S=\gamma+\delta+\epsilon,
\qquad
T=\gamma(1+a)+a\delta+\epsilon.
\end{equation}
On $z^r$,
\begin{align}
\mathcal H z^r={}&(r+\alpha)(r+\beta)z^{r+1}
\nonumber\\
&-[(1+a)r(r-1)+Tr+q]z^r
+a r(r+\gamma-1)z^{r-1}.
\label{eq:monomial-action}
\end{align}

For the Lam\'e parameters, $\gamma=\delta=\epsilon=1/2$, hence $S=3/2$ and $T=1+a$.

\begin{theorem}[Even Lam\'e polynomial sector]
Let $\nu=2N$, $N\in\Z_{\ge0}$. Then
\begin{equation}
\alpha=-N,
\qquad
\beta=N+\frac12,
\end{equation}
and the polynomial space
\begin{equation}
\Pn=\operatorname{span}\{1,z,\ldots,z^N\}
\end{equation}
is invariant under the polynomial Heun operator \eqref{eq:heun-poly-operator}. In the monomial basis, the accessory matrix $L_N$ has entries
\begin{align}
A_r&=(r-N)\left(r+N+\frac12\right),\label{eq:Ar}\\
B_r&=-(1+a)r^2,\label{eq:Br}\\
C_r&=a r\left(r-\frac12\right),\label{eq:Cr}
\end{align}
with
\begin{equation}
L_N=
\begin{pmatrix}
B_0&C_1&0&\cdots&0\\
A_0&B_1&C_2&\ddots&\vdots\\
0&A_1&B_2&\ddots&0\\
\vdots&\ddots&\ddots&\ddots&C_N\\
0&\cdots&0&A_{N-1}&B_N
\end{pmatrix}.
\end{equation}
Polynomial Lam\'e--Heun solutions occur exactly when
\begin{equation}
\Delta_N(q;a)=\det(L_N-qI)=0.
\label{eq:spectral-poly}
\end{equation}
\end{theorem}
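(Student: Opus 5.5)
The plan is to compute the action of the $q$-independent part of the polynomial Heun operator on monomials, and to read off both the invariance of $\Pn$ and the entries of $L_N$. The spectral equation \eqref{eq:spectral-poly} then follows from linear algebra, together with a degree argument for the word ``exactly''.

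First I would write $\mathcal H=\mathcal H_0-q$, where $\mathcal H_0$ is \eqref{eq:heun-poly-operator} with the constant term $-q$ removed. I would specialize \eqref{eq:monomial-action} to the Lam\'e locus, using $\gamma=\delta=\epsilon=\tfrac12$, $S=\tfrac32$ and $T=1+a$. With $\nu=2N$, \eqref{eq:lame-heun-parameters} gives $\alpha=-N$ and $\beta=N+\tfrac12$. The diagonal coefficient then becomes
\[
-\bigl[(1+a)r(r-1)+(1+a)r\bigr]=-(1+a)r^2=B_r .
\]
The lowering coefficient is $a r(r-\tfrac12)=C_r$, and the raising coefficient is $(r-N)(r+N+\tfrac12)=A_r$. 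Thus
\[
\mathcal H_0 z^r=A_r z^{r+1}+B_r z^r+C_r z^{r-1}.
\]
Since $A_N=0$, the monomial $z^N$ is not raised out of $\Pn$, and since $C_0=0$, the monomial $1$ is not lowered. Hence $\mathcal H_0\Pn\subset\Pn$, and therefore $\mathcal H\Pn\subset\Pn$.

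Next I would assemble the matrix of $\mathcal H_0|_{\Pn}$ in the basis $1,z,\dots,z^N$. Column $r$ holds the coordinates of $\mathcal H_0 z^r$: the entry $B_r$ sits on the diagonal, $A_r$ in row $r+1$ (below the diagonal), and $C_r$ in row $r-1$ (above the diagonal). This is exactly the tridiagonal $L_N$ displayed in the statement, and the matrix of $\mathcal H$ on $\Pn$ is $L_N-qI$. Consequently a nonzero polynomial $y\in\Pn$ satisfies $\mathcal H y=0$, which is equivalent to \eqref{eq:heun}, if and only if its coefficient vector lies in $\ker(L_N-qI)$. Such a vector exists if and only if $\Delta_N(q;a)=0$.

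The step I expect to be the main obstacle is the word ``exactly''. One must rule out polynomial solutions that do not lie in $\Pn$. Suppose $y$ is a polynomial solution of degree $d$. Its top-degree coefficient in $\mathcal H y$ is $(d+\alpha)(d+\beta)$ times the leading coefficient of $y$, and this must vanish. Because $\beta=N+\tfrac12$ is a positive half-integer, $d+\beta\neq0$, which forces $d=-\alpha=N$. So every polynomial solution lies in $\Pn$, and the equivalence with \eqref{eq:spectral-poly} is complete. Finally, I would remark that these are polynomial solutions in the Heun variable $z$ with exponent $0$ at $z=0$ and $z=1$. This is the even parity class; other parity classes would require gauge factors and are not claimed here.
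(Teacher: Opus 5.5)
Your proposal is correct and follows the paper's proof. Like the paper, you specialize the monomial action \eqref{eq:monomial-action} to the Lam\'e locus, obtain invariance of $\Pn$ from the vanishing of $(r+\alpha)(r+\beta)$ at $r=N$, and read off $L_N-qI$ as the matrix of $\mathcal H$ on $\Pn$. Your leading-coefficient argument is a worthwhile addition: since $d+\beta\neq0$, every polynomial solution has degree exactly $N$, which justifies the word ``exactly'' that the paper's proof leaves implicit.
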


\begin{proof}
Equation \eqref{eq:monomial-action} raises degree by one with coefficient $(r+\alpha)(r+\beta)$. At $r=N$, this coefficient vanishes because $N+\alpha=0$, so $\Pn$ is invariant. Substituting the Lam\'e parameters into \eqref{eq:monomial-action} gives \eqref{eq:Ar}--\eqref{eq:Cr}. The accessory parameter is therefore an eigenvalue of $L_N$.
\end{proof}

The determinant obeys the three-term recurrence
\begin{equation}
\Delta_{-1}=1,
\qquad
\Delta_0=-q,
\end{equation}
\begin{equation}
\Delta_r=(B_r-q)\Delta_{r-1}-A_{r-1}C_r\Delta_{r-2},
\qquad r=1,\ldots,N.
\label{eq:det-rec}
\end{equation}
Since $q=-h/(4m)$ and $a=1/m$, the roots of \eqref{eq:spectral-poly} give the corresponding finite Lam\'e spectral values.

\begin{remark}
For odd integral $\nu$, polynomial sectors occur after multiplication by one or more local exponent factors $z^{1/2}$, $(z-1)^{1/2}$, and $(z-a)^{1/2}$. These gauge sectors are the natural continuation of the same Heun truncation mechanism but are not required for the even-sector result above.
\end{remark}

\subsubsection{Example: $\nu=4$}

For $\nu=4$ one has $N=2$ and a $3\times3$ accessory matrix. Figure~\ref{fig:bands} plots the three values of $h=-4mq$ obtained from the exact eigenvalues of $L_2$ as $m$ varies in the physical interval. The figure is illustrative: it is generated directly from the finite matrix, not from fitted data.

\begin{figure}[htbp]
\centering
\begin{tikzpicture}
\begin{axis}[
width=0.82\textwidth,
height=0.48\textwidth,
xlabel={$m=k^2$},
ylabel={$h$},
xmin=0.05,xmax=0.95,
ymin=0,ymax=21,
grid=major,
legend pos=north west]
\addplot[thick] table[row sep=\\] {
m h\\
0.050000 0.471356\\0.100000 0.884925\\0.150000 1.243419\\0.200000 1.553129\\0.250000 1.821995\\0.300000 2.057877\\0.350000 2.267601\\0.400000 2.456672\\0.450000 2.629355\\0.500000 2.788897\\0.550000 2.937756\\0.600000 3.077796\\0.650000 3.210442\\0.700000 3.336790\\0.750000 3.457697\\0.800000 3.573835\\0.850000 3.685741\\0.900000 3.793847\\0.950000 3.898504\\};
\addlegendentry{$h_1$}
\addplot[thick,dashed] table[row sep=\\] {
m h\\
0.050000 4.427148\\0.100000 4.908922\\0.150000 5.442322\\0.200000 6.020706\\0.250000 6.635702\\0.300000 7.278913\\0.350000 7.942840\\0.400000 8.621124\\0.450000 9.308401\\0.500000 10.000000\\0.550000 10.691599\\0.600000 11.378876\\0.650000 12.057160\\0.700000 12.721087\\0.750000 13.364298\\0.800000 13.979294\\0.850000 14.557678\\0.900000 15.091078\\0.950000 15.572852\\};
\addlegendentry{$h_2$}
\addplot[thick,dotted] table[row sep=\\] {
m h\\
0.050000 16.101496\\0.100000 16.206153\\0.150000 16.314259\\0.200000 16.426165\\0.250000 16.542303\\0.300000 16.663210\\0.350000 16.789558\\0.400000 16.922204\\0.450000 17.062244\\0.500000 17.211103\\0.550000 17.370645\\0.600000 17.543328\\0.650000 17.732399\\0.700000 17.942123\\0.750000 18.178005\\0.800000 18.446871\\0.850000 18.756581\\0.900000 19.115075\\0.950000 19.528644\\};
\addlegendentry{$h_3$}
\end{axis}
\end{tikzpicture}
\caption{Three finite polynomial spectral branches for $\nu=4$ ($N=2$), computed from the exact accessory matrix $L_2$.}
\label{fig:bands}
\end{figure}

\subsection{Coefficient asymptotics and accessory spectral selection}
\label{sec:complete-asymptotic}
\subsubsection{Poincar\'e roots and exponential coefficient growth}

Divide \eqref{eq:heun-recurrence-general} by $n^2$ and let $n\to\infty$. The limiting recurrence is
\begin{equation}
a c_{n+1}-(1+a)c_n+c_{n-1}=0.
\label{eq:limiting-recurrence}
\end{equation}
Its characteristic polynomial is
\begin{equation}
a r^2-(1+a)r+1=(r-1)(ar-1),
\label{eq:char-poly}
\end{equation}
so the two Poincar\'e roots are
\begin{equation}
r_1=1,
\qquad
r_a=\frac1a.
\label{eq:poincare-roots}
\end{equation}
For the Lam\'e specialization they are $1$ and $m$. Consequently, the recurrence itself already encodes the two finite continuation barriers $z=1$ and $z=a$: the reciprocal moduli of the characteristic roots are their distances from the expansion center.

\begin{proposition}[Generic root growth]
\label{prop:generic-root-growth}
Assume $|a|>1$ and that the exponent-zero solution at $z=0$ has a nonzero singular connection coefficient at $z=1$. Then
\begin{equation}
\limsup_{n\to\infty}|c_n(q)|^{1/n}=1.
\label{eq:generic-limsup}
\end{equation}
If the $z=1$ singular contribution vanishes and the next obstruction is the singular point $z=a$, then
\begin{equation}
\limsup_{n\to\infty}|c_n(q)|^{1/n}=\frac1{|a|}.
\label{eq:spectral-limsup}
\end{equation}
For $a=1/m$, $0<m<1$, the second rate is $m$.
\end{proposition}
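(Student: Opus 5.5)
The plan is to convert the statement into a question about the radius of convergence of the Taylor series $y(z;q)=\sum c_n(q)z^n$ at $z=0$, and then apply the Cauchy--Hadamard formula
\begin{equation}
\limsup_{n\to\infty}|c_n(q)|^{1/n}=\frac1{\rho},
\end{equation}
where $\rho$ is the radius of convergence. The recurrence \eqref{eq:heun-recurrence-general} and its Poincar\'e roots \eqref{eq:poincare-roots} only serve as a consistency check. They show that only the rates $1$ and $1/|a|$ can occur: by Perron's theorem, any nonzero solution of a recurrence whose limiting characteristic roots $1$ and $1/a$ have distinct moduli satisfies $\lim|c_{n+1}/c_n|\in\{1,1/|a|\}$. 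The actual selection between the two rates, however, will be made analytically rather than through the recurrence.

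First I will show that $\rho\ge1$ in every case. The exponent-zero Frobenius solution at $z=0$ is holomorphic in the disk about $0$ whose radius is the distance to the nearest other singularity. The other finite singularities are $1$ and $a$, with $|a|>1$, so by \eqref{eq:generic-radius} and \eqref{eq:R0} the series converges on $|z|<1$.

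In the generic case I will show $\rho=1$. Near $z=1$ the solution $y$ continued along $(0,1)$ decomposes as $A\,u_1(z)+B\,(1-z)^{1-\delta}u_2(z)$ with $u_1,u_2$ holomorphic and $u_2(1)\neq0$. If $1-\delta\notin\Z$ (for Lam\'e, $1-\delta=1/2$), then $B\neq0$ makes $y$ a genuinely branched function at $z=1$, so it has no holomorphic extension to any disk $|z|<1+\varepsilon$. Hence $\rho\le1$, which gives \eqref{eq:generic-limsup}. For integral exponent differences I will use the logarithmic term in place of the branch, and the argument is the same.

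In the second case, $B=0$. Then $y=A\,u_1$ is holomorphic at $z=1$, so the point $z=1$ is removable for this particular solution. The function $y$ therefore extends holomorphically to $|z|<|a|$. I will make this precise by a monodromy argument: $y$ is single-valued on the slit region, and it is analytic at $0$ and at $1$, so it is holomorphic on the disk minus finitely many removable points. This gives $\rho\ge|a|$. The hypothesis that the next obstruction is $z=a$ means the singular connection coefficient at $a$ is nonzero, and the same branching argument at $a$ (with $1-\epsilon=1/2$) then gives $\rho=|a|$. This yields \eqref{eq:spectral-limsup}, and for $a=1/m$ the rate is $m$.

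The main obstacle is making the second case rigorous. Continuation along different paths around $z=1$ inside $|z|<|a|$ must not produce new branches. I will handle this by observing that on the disk $|z|<|a|$ the only singularities are $0$ and $1$. The fundamental group of the disk minus these two points is generated by the local loops $\gamma_0$ and $\gamma_1$. The solution $y$ is fixed by $M_0$, being the exponent-zero solution, and it is fixed by $M_1$ because $B=0$. So $y$ is invariant under the whole local monodromy and is single-valued there. Its isolated singularities at $0$ and $1$ are then removable, because $y$ is bounded near each of them.
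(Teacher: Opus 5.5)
Your proposal is correct and follows the paper's own route: Cauchy--Hadamard plus identification of the nearest nonremovable singularity ($z=1$ generically, $z=a$ once the singular branch at $z=1$ is absent), with the Poincar\'e roots serving only as a consistency check, and your monodromy argument on the twice-punctured disk $|z|<|a|$ makes rigorous the paper's one-line claim that the solution then extends holomorphically through $z=1$. The only quibble is in your side remark: the ratio statement you invoke is Poincar\'e's theorem (Perron's theorem adds a fundamental system realizing both roots), and it applies only to solutions that are not eventually zero, so terminating (polynomial) coefficient sequences must be excluded there.
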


\begin{proof}
The Cauchy--Hadamard formula gives
\begin{equation}
R^{-1}=\limsup_{n\to\infty}|c_n|^{1/n}.
\end{equation}
Generically the nearest nonremovable singularity is at $z=1$, so $R=1$. If the corresponding local singular component is absent, the solution extends holomorphically through $z=1$ and the next finite obstruction lies at $z=a$, giving $R=|a|$. The two values coincide with the characteristic roots of \eqref{eq:limiting-recurrence}.
\end{proof}

\subsubsection{Darboux asymptotics and the accessory spectrum}

The preceding root test determines only the exponential scale. The algebraic prefactor is obtained from the local Frobenius exponents. Near $z=1$, write the continuation of the normalized solution \eqref{eq:heun-series} as
\begin{equation}
y(z;q)=A_0(q)\,f_0(z;q)
+A_1(q)(1-z)^{1-\delta}f_1(z;q),
\label{eq:connection-at-one}
\end{equation}
where $f_0$ and $f_1$ are analytic and nonzero at $z=1$. The coefficient $A_1(q)$ is the connection coefficient of the branch that is nonanalytic in the algebraic variable whenever $1-\delta\notin\Z_{\ge0}$.

By Darboux's principle \cite{KnuthWilf}, if $A_1(q)\neq0$ and $z=1$ is the unique nearest singularity, then
\begin{equation}
c_n(q)=
\frac{A_1(q)f_1(1;q)}{\Gamma(\delta-1)}
\,n^{\delta-2}
\left(1+O\left(n^{-1}\right)\right).
\label{eq:darboux-general-one}
\end{equation}
The corresponding contribution from $z=a$ has the form
\begin{equation}
c_n^{(a)}(q)=
\frac{A_a(q)g_a(a;q)}{\Gamma(\epsilon-1)}
\,a^{-n}n^{\epsilon-2}
\left(1+O\left(n^{-1}\right)\right),
\label{eq:darboux-general-a}
\end{equation}
where $A_a(q)$ is the singular connection coefficient at $z=a$.

For the Lam\'e parameters $\delta=\epsilon=1/2$, these simplify to
\begin{equation}
c_n(q)=
-\frac{A_1(q)f_1(1;q)}{2\sqrt\pi}
\,n^{-3/2}
+O(n^{-5/2})
\label{eq:lame-darboux-one}
\end{equation}
when the singularity at $z=1$ is present, whereas after its removal the next contribution is
\begin{equation}
c_n(q)=
-\frac{A_a(q)g_a(a;q)}{2\sqrt\pi}
\,a^{-n}n^{-3/2}
\left(1+O\left(n^{-1}\right)\right).
\label{eq:lame-darboux-a}
\end{equation}
Thus, in the physical Lam\'e range,
\begin{equation}
c_n(q)=O\left(m^n n^{-3/2}\right)
\label{eq:lame-spectral-decay}
\end{equation}
whenever the $z=1$ singular branch has been removed and $z=a=1/m$ is the next obstruction.

\begin{theorem}[Accessory spectrum as asymptotic branch suppression]
\label{thm:accessory-asymptotic}
Fix an endpoint parity sector of the Lam\'e problem and factor the corresponding local powers
\begin{equation}
Y(z)=z^{\sigma_0}(1-z)^{\sigma_1}F(z),
\qquad
\sigma_0,\sigma_1\in\left\{0,\frac12\right\},
\label{eq:parity-gauge}
\end{equation}
so that the physical eigenfunction is represented by a function $F$ analytic at both $z=0$ and $z=1$. Let $F(z;q)=\sum_{n\ge0}d_n(q)z^n$ denote the corresponding normalized Frobenius series at $z=0$. Then the admissible accessory values $q_j$ in that parity sector are precisely the zeros of the unwanted connection coefficient at $z=1$:
\begin{equation}
\mathcal C_{\sigma_0,\sigma_1}(q_j)=0.
\label{eq:connection-spectrum}
\end{equation}
At a generic value of $q$,
\begin{equation}
\limsup_{n\to\infty}|d_n(q)|^{1/n}=1,
\end{equation}
whereas at an eigenvalue $q_j$, provided $z=a$ is the next nonremovable singularity,
\begin{equation}
\limsup_{n\to\infty}|d_n(q_j)|^{1/n}=\frac1{|a|}=m.
\label{eq:qj-root-drop}
\end{equation}
Moreover, for the Lam\'e exponent $1/2$ at the next singularity,
\begin{equation}
d_n(q_j)=D_j\,m^n n^{-3/2}
\left(1+O\left(n^{-1}\right)\right)
\label{eq:qj-asymptotic}
\end{equation}
with a nonzero constant $D_j$ unless the singularity at $z=a$ is also removable.
\end{theorem}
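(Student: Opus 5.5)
The plan is to pass to the gauged function $F$ and reduce all three claims to earlier results: the connection characterization of the spectrum, Cauchy--Hadamard in Proposition~\ref{prop:generic-root-growth}, and the Darboux formula \eqref{eq:lame-darboux-a}.

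First, substitute \eqref{eq:parity-gauge} into the Heun equation \eqref{eq:heun} with the Lam\'e parameters. Multiplying by $z^{\sigma_0}(1-z)^{\sigma_1}$ is a Riemann-scheme shift, so $F$ again satisfies a Heun equation with singular points $0,1,a,\infty$. Its exponents are $\{0,\tfrac12-\sigma_0\}$ at $0$ and $\{0,\tfrac12-\sigma_1\}$ at $1$. At $a$ the exponents stay $\{0,\tfrac12\}$, and at infinity they shift by $\sigma_0+\sigma_1$. The accessory parameter becomes an affine function of $q$. In every parity sector, exponent $0$ at $z=1$ is paired with a half-integer exponent $\pm\tfrac12$, so there is no resonance. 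The continuation of the normalized Frobenius series $F(z;q)$ therefore decomposes as in \eqref{eq:connection-at-one}: an analytic part plus $\mathcal C_{\sigma_0,\sigma_1}(q)$ times $(1-z)^{\pm1/2}$ times a function analytic and nonzero at $1$. This defines $\mathcal C_{\sigma_0,\sigma_1}$.

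Next, characterize the eigenvalues by \eqref{eq:connection-spectrum}. By the discussion after Theorem~\ref{thm:finite-complete}, an eigenfunction of the Sturm--Liouville problem in a given parity sector corresponds under $z=\sn^2 u$ to a solution whose local behaviour at $z=0$ and $z=1$ is exactly $z^{\sigma_0}(1-z)^{\sigma_1}$ times an analytic function. At $z=0$ this selects the normalized series $F$, up to scale. At $z=1$ it forces the unwanted branch to vanish, which is $\mathcal C_{\sigma_0,\sigma_1}(q)=0$. Conversely, if $\mathcal C_{\sigma_0,\sigma_1}(q)=0$, then $Y$ pulled back to $[0,K]$ has the prescribed endpoint parity and satisfies the boundary condition. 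Hence the admissible $q_j$ are exactly the zeros of $\mathcal C_{\sigma_0,\sigma_1}$.

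Then handle the growth claims using $0<m<1$, so $a=1/m>1$.
\begin{itemize}
\item For generic $q$, $\mathcal C_{\sigma_0,\sigma_1}(q)\ne0$. The nearest singularity of $F$ is then $z=1$, and Cauchy--Hadamard gives $\limsup|d_n|^{1/n}=1$, as in Proposition~\ref{prop:generic-root-growth}.
\item At $q=q_j$, $F$ continues analytically across $z=1$. By hypothesis $z=a$ is the next nonremovable singularity, so the radius of convergence is $|a|$ and the rate is $1/|a|=m$, giving \eqref{eq:qj-root-drop}. The regular singular point at infinity lies outside the disk $|z|<a$, so it does not interfere.
\item For \eqref{eq:qj-asymptotic}, note that $z=a$ is now the unique singularity on the circle of convergence. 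Write the local decomposition there with exponent $\tfrac12$ and apply Darboux's principle as in \eqref{eq:darboux-general-a} with $\epsilon=\tfrac12$. This gives $D_j=-A_a(q_j)g_a(a;q_j)/(2\sqrt\pi)$, and $D_j\neq0$ exactly when $A_a(q_j)\ne0$, i.e.\ unless $z=a$ is also removable.
\end{itemize}

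The step I expect to be the main obstacle is the genericity claim: that $\mathcal C_{\sigma_0,\sigma_1}(q)\not\equiv0$, so it vanishes only on a discrete set. I would argue this from the Sturm--Liouville side. If $\mathcal C_{\sigma_0,\sigma_1}$ vanished identically, every complex $h$ would be an eigenvalue of the self-adjoint operator of Theorem~\ref{thm:finite-complete}. That contradicts discreteness and reality of its spectrum. Since $\mathcal C_{\sigma_0,\sigma_1}$ is entire in $q$ (from analytic dependence of the connection coefficients), its zeros are then isolated. A secondary point needing care is the uniformity of the Darboux error term, which standard transfer theorems provide since $F$ is $\Delta$-analytic near $z=a$.
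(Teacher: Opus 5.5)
Your proposal is correct and follows essentially the same route as the paper's proof. You gauge to $F$, identify the parity-sector spectrum with the zeros of the $z=1$ connection coefficient, use Cauchy--Hadamard for the radii $1$ and $|a|$, and apply Darboux at $z=a$ for the $m^n n^{-3/2}$ law, while adding details the paper leaves implicit: the converse direction, and the self-adjointness argument that $\mathcal C_{\sigma_0,\sigma_1}\not\equiv0$.

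One small slip: the exponents of $F$ at $z=0$ are $\{-\sigma_0,\tfrac12-\sigma_0\}$ (likewise at $z=1$), not $\{0,\tfrac12-\sigma_0\}$, which would give the resonant pair $\{0,0\}$ when $\sigma_0=\tfrac12$. It is the corrected pair, equivalently $\{0,\tfrac12-2\sigma_0\}$ up to a shift, that yields the nonresonant $\pm\tfrac12$ you rely on in the next step.
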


\begin{proof}
For a fixed parity sector, the factorization \eqref{eq:parity-gauge} converts the desired endpoint exponent at $z=1$ into the analytic exponent of the reduced equation. The boundary-value eigencondition is therefore equivalent to the absence of the complementary local Frobenius branch, which is the scalar connection condition \eqref{eq:connection-spectrum}. For nonzero connection coefficient, the nearest singular contribution at $z=1$ gives unit root growth by Darboux's theorem and Cauchy--Hadamard. At $q=q_j$ that contribution vanishes, so analytic continuation crosses $z=1$ and the next obstruction is $z=a$. The Cauchy--Hadamard radius becomes $|a|$, while the local exponent $1/2$ at $z=a$ gives the $n^{-3/2}$ Darboux prefactor. Since $a=1/m$, the exponential factor is $m^n$.
\end{proof}

This theorem gives a direct spectral interpretation of large-order coefficients: the accessory eigenvalues are exactly those values for which the dominant asymptotic branch is suppressed. Numerically, one may therefore detect $q_j$ either by a connection determinant, by a continued fraction generated from \eqref{eq:heun-recurrence-general}, or by observing the root-growth transition of $d_n(q)$ from $1$ to $m$.

\subsubsection{Polynomial eigenvalues as exact termination}

The finite spectral values of Section~\ref{sec:qes} are stronger exceptional points. When $\alpha=-N$ (or, after an appropriate exponent gauge, $\beta=-N$) and
\begin{equation}
\Delta_N(q_j;a)=0,
\end{equation}
the recurrence terminates:
\begin{equation}
c_{N+1}(q_j)=c_{N+2}(q_j)=\cdots=0.
\label{eq:termination}
\end{equation}
Hence the corresponding algebraic Heun eigenfunction is a polynomial and has infinite radius of convergence as a function of $z$. In coefficient language,
\begin{equation}
\limsup_{n\to\infty}|c_n(q_j)|^{1/n}=0.
\label{eq:poly-root-growth}
\end{equation}
Thus the accessory spectrum displays a three-level analytic hierarchy:
\begin{equation}
\text{generic }q:\rho=1,
\qquad
\text{regular spectral }q_j:\rho=m,
\qquad
\text{polynomial }q_j:\rho=0,
\label{eq:three-level-growth}
\end{equation}
where $\rho=\limsup |c_n|^{1/n}$ after the parity-adapted gauge. This gives a coefficient-asymptotic signature that distinguishes generic local Heun solutions, globally admissible Lam\'e eigenfunctions, and exact finite polynomial states.

\subsubsection{Complex-analytic interpretation}

The numerical radius of one Taylor series is not invariant under a M\"obius transformation of the Heun singular set. The invariant object is the four-point configuration
\begin{equation}
\{0,1,a,\infty\}\subset\CP^1
\end{equation}
together with its cross-ratio. For the Lam\'e family $a=1/m$, the elliptic modulus therefore controls both the conformal singularity geometry and the exponential scale of the parity-adapted Taylor coefficients. The spectral condition removes one local branch without changing the differential equation's singular set; in this sense spectral quantization appears as a selection of a more analytically continuable solution inside a fixed Fuchsian geometry.


\subsection{Lam\'e monodromy, character geometry, and spectral alignment}
\label{sec:monodromy}
\subsubsection{Lam\'e specialization and the pillowcase orbifold}

For the Lam\'e--Heun parameters
\begin{equation}
\gamma=\delta=\epsilon=\frac12,
\qquad
\alpha=-\frac{\nu}{2},
\qquad
\beta=\frac{\nu+1}{2},
\end{equation}
the three finite local monodromies have eigenvalues $\{1,-1\}$. If $\nu\in\mathbb Z$, the monodromy at infinity has the same unordered eigenvalue set. Hence every local projective monodromy is an involution.

\begin{theorem}[Lam\'e projective monodromy factorization]
\label{thm:pillow}
For integer Lam\'e coupling $\nu\in\mathbb Z$, the projective monodromy representation factors through the orbifold group of signature $(0;2,2,2,2)$,
\begin{equation}
\Gamma_{2,2,2,2}
=
\left\langle r_0,r_1,r_a,r_\infty\;\middle|\;
 r_0^2=r_1^2=r_a^2=r_\infty^2=1,
\quad r_0r_1r_ar_\infty=1
\right\rangle.
\label{eq:pillow-group}
\end{equation}
Moreover,
\begin{equation}
\Gamma_{2,2,2,2}\cong \mathbb Z^2\rtimes_{-1}\mathbb Z_2,
\end{equation}
and its index-two translation subgroup is isomorphic to $\mathbb Z^2$.
\end{theorem}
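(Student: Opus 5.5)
The plan has two independent parts: first the factorization of the projective monodromy through $\Gamma_{2,2,2,2}$, then the abstract structure of $\Gamma_{2,2,2,2}$ itself.

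\emph{Factorization.} I will use the presentation \eqref{eq:van-kampen-presentation} of $\pi_1(X_a)$ together with the product relation \eqref{eq:monodromy-product}. By von Dyck's theorem, it suffices to show that each projective local monodromy $\overline M_s\in PGL_2(\C)$ squares to the identity. The product relation $r_0r_1r_ar_\infty=1$ is then inherited automatically from $M_0M_1M_aM_\infty=I$.

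For the three finite singular points the exponent difference is $1/2$, which is not an integer. So $M_0,M_1,M_a$ are diagonalizable with eigenvalues $\{1,-1\}$, and $M_s^2=I$.

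At infinity the exponents are $\alpha=-\nu/2$ and $\beta=(\nu+1)/2$, with difference $\beta-\alpha=\nu+\tfrac12$. This is nonintegral, so $M_\infty$ is again semisimple. Its eigenvalue ratio is $e^{2\pi i(\nu+1/2)}=-e^{2\pi i\nu}$, which equals $-1$ exactly when $\nu\in\Z$. Hence $M_\infty^2$ is a scalar matrix and $\overline M_\infty^{\,2}=1$. This is the only place where integrality of $\nu$ enters, and I will point that out.

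\emph{Structure of $\Gamma_{2,2,2,2}$.} I will eliminate $r_\infty=(r_0r_1r_a)^{-1}$. This gives
\[
\Gamma_{2,2,2,2}=\langle r_0,r_1,r_a\mid r_0^2=r_1^2=r_a^2=(r_0r_1r_a)^2=1\rangle .
\]
Using $r_i=r_i^{-1}$, the last relation is equivalent to $r_0r_1r_a=r_ar_1r_0$.

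On the other side I take the standard presentation
\[
\mathbb Z^2\rtimes_{-1}\mathbb Z_2=\langle x,y,t\mid [x,y]=t^2=1,\ txt=x^{-1},\ tyt=y^{-1}\rangle .
\]
I define $\Phi$ by $t\mapsto r_0$, $x\mapsto r_0r_1$, $y\mapsto r_1r_a$, and check the relations:
\begin{itemize}
\item $r_0(r_0r_1)r_0=r_1r_0=x^{-1}$;
\item $r_0(r_1r_a)r_0=r_ar_1$, which follows from $r_0r_1r_a=r_ar_1r_0$;
\item $xy=r_0r_a$ equals $yx=r_1r_ar_0r_1$: left-multiply $r_0r_1r_a=r_ar_1r_0$ by $r_0$ to get $r_1r_a=r_0r_ar_1r_0$, then right-multiply by $r_0r_1$.
\end{itemize}
For the inverse $\Psi$ I send $r_0\mapsto t$, $r_1\mapsto x^{-1}t$, $r_a\mapsto x^{-1}y\,t$ (reading off $r_1=r_0x$ and $r_a=r_1y$). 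Each image is an involution because $t$ inverts $\mathbb Z^2$. The relation $(r_0r_1r_a)^2=1$ reduces to $(yt)^2=1$, which holds for the same reason. Since $\Phi$ and $\Psi$ are mutually inverse on generators, the two groups are isomorphic.

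As a cross-check I will also give the geometric model: $r_0,r_1,r_a,r_\infty$ act on $\R^2$ as point reflections $z\mapsto 2p_s-z$ about $p_0=0$, $p_1=\tfrac12e_1$, $p_a=\tfrac12(e_1+e_2)$, $p_\infty=\tfrac12e_2$. The pairwise products are translations, and $p_0-p_1+p_a-p_\infty=0$ gives the product relation.

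The translation subgroup is $\langle x,y\rangle\cong\mathbb Z^2$. It is the kernel of the homomorphism $\Gamma_{2,2,2,2}\to\mathbb Z_2$ sending every $r_s\mapsto 1$, so it has index two.

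\emph{Main obstacle.} The delicate point is the relation check, in particular the commutation $[x,y]=1$, which needs the small manipulation of $(r_0r_1r_a)^2=1$ carried out above. A secondary point is to state clearly that the linear representation factors only up to scalars: $M_\infty^2$ is scalar but need not be $I$.
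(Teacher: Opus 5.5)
Your factorization argument is the same as the paper's. Both read off the exponent differences $1/2,1/2,1/2,\nu+\tfrac12$, conclude that every projective local monodromy is an involution when $\nu\in\mathbb Z$, and inherit the product relation from the loop relation. You are slightly more careful in noting that $M_\infty^2$ is only scalar.

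Where you differ is the structure of $\Gamma_{2,2,2,2}$. The paper identifies the presentation with the Euclidean pillowcase orbifold group generated by four half-turns and reads off the translation lattice as an index-two normal subgroup. This tacitly uses the orbifold (Poincar\'e-polygon) fact that the abstract presentation is faithful to that crystallographic group. You instead eliminate $r_\infty$ by a Tietze move and write explicit homomorphisms between presentations, so the isomorphism is proved purely algebraically. Your half-turn model is then only a consistency check: it gives a homomorphism, not faithfulness. Your route is self-contained; the paper's ties the lattice directly to the elliptic double cover $E_a$.

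There is one computational slip to fix. Reading off $r_a=r_1y$ with $r_1=x^{-1}t$ gives $r_a=x^{-1}ty=x^{-1}y^{-1}t$, because $ty=y^{-1}t$. It does not give $x^{-1}yt$. With your formula one gets $\Phi(x^{-1}yt)=r_1r_0r_1r_ar_0=r_1r_ar_1\neq r_a$, and $\Psi\Phi(y)=y^{-1}$. So the claim that $\Phi$ and $\Psi$ are mutually inverse on generators is false as written. Either of the following repairs it:
\begin{itemize}
\item Set $\Psi(r_a)=x^{-1}y^{-1}t$. Then $r_0r_1r_a\mapsto y^{-1}t$, the relation check becomes $(y^{-1}t)^2=1$, and both composites are the identity on generators.
\item Keep your $\Psi$, which is still a homomorphism, and observe that $\Psi\Phi$ is the automorphism fixing $x,t$ and sending $y\mapsto y^{-1}$. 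This makes $\Phi$ injective, and $\Phi$ is evidently surjective since its image contains $r_0$, $r_1=r_0(r_0r_1)$ and $r_a=r_1(r_1r_a)$.
\end{itemize}
The rest is fine, including the identification of the translation subgroup as the index-two kernel of $r_s\mapsto 1\in\mathbb Z_2$.
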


\begin{proof}
The exponent differences at the four singular points are $1/2,1/2,1/2,\nu+1/2$. For integer $\nu$, each local projective monodromy therefore has order two. Relation \eqref{eq:loop-relation} supplies the product relation, so the representation factors through \eqref{eq:pillow-group}. The group in \eqref{eq:pillow-group} is the Euclidean pillowcase orbifold group. It is generated by half-turns and contains the translation lattice of the universal Euclidean covering as an index-two normal subgroup, yielding the stated semidirect product. This is precisely the orbifold structure underlying the twofold elliptic covering of the four branch points.
\end{proof}

The geometric content is immediate. The elliptic curve
\begin{equation}
E_a:\quad y^2=z(z-1)(z-a)
\label{eq:elliptic-double-cover}
\end{equation}
is a twofold branched cover of $\mathbb{CP}^1$ at $0,1,a,\infty$. Interpreted orbifold-theoretically, it is an unbranched cover of the pillowcase orbifold, and its fundamental group $\pi_1(E_a)\cong\mathbb Z^2$ is the translation subgroup in Theorem~\ref{thm:pillow}. Thus the classical elliptic uniformization of Lam\'e's equation is also the natural topological cover on which the order-two local projective branching is resolved.

\subsubsection{$SL_2(\mathbb C)$ character surface}

Rescale each local monodromy by a scalar so that
\begin{equation}
\widehat M_s\in SL_2(\mathbb C).
\end{equation}
Let
\begin{equation}
p_s=\operatorname{tr}(\widehat M_s),
\end{equation}
and introduce the composite trace coordinates
\begin{equation}
x=\operatorname{tr}(\widehat M_0\widehat M_1),
\qquad
y=\operatorname{tr}(\widehat M_1\widehat M_a),
\qquad
z=\operatorname{tr}(\widehat M_0\widehat M_a).
\end{equation}
The four-punctured-sphere character variety is the Fricke cubic
\begin{align}
0={}&x^2+y^2+z^2+xyz
-(p_0p_1+p_ap_\infty)x
-(p_1p_a+p_0p_\infty)y
\nonumber\\
&-(p_0p_a+p_1p_\infty)z
+p_0^2+p_1^2+p_a^2+p_\infty^2
+p_0p_1p_ap_\infty-4.
\label{eq:fricke-cubic}
\end{align}

For integer Lam\'e coupling the normalized local traces vanish, so the character surface reduces to
\begin{equation}
x^2+y^2+z^2+xyz-4=0.
\label{eq:lame-fricke}
\end{equation}
Thus the accessory parameter defines a holomorphic, possibly multivalued Riemann--Hilbert map
\begin{equation}
q\longmapsto \bigl(x(q),y(q),z(q)\bigr)
\end{equation}
into the cubic surface \eqref{eq:lame-fricke}. The dependence of global Lam\'e monodromy on the accessory spectrum may therefore be studied as a curve on a fixed character surface rather than as unrelated matrix data.

\subsubsection{Monodromy alignment and the accessory spectrum}

Let $Y_0(z;q)$ be the exponent-zero solution based at $z=0$, and let
\begin{equation}
\bigl(Y_1^{(0)},Y_1^{(1-\delta)}\bigr)
\end{equation}
be a Frobenius basis at $z=1$. On an overlap domain,
\begin{equation}
Y_0(z;q)
=A_0(q)Y_1^{(0)}(z;q)+A_1(q)Y_1^{(1-\delta)}(z;q).
\label{eq:connection-coefficients}
\end{equation}
The coefficient $A_1(q)$ is the same singular connection amplitude that controls the leading Darboux contribution to the Taylor coefficients developed in Section~6.

\begin{theorem}[Spectral regularity as monodromy alignment]
\label{thm:alignment}
Assume the endpoint condition requires the continuation of $Y_0$ to lie in the regular exponent line at $z=1$. Then an accessory value $q_j$ is admissible precisely when
\begin{equation}
A_1(q_j)=0.
\label{eq:alignment-zero}
\end{equation}
Equivalently, the analytically continued line spanned by $Y_0$ is an eigenline of the local monodromy $M_1$ associated with the regular exponent. At such a value the nearest singular contribution to the Taylor coefficients is suppressed, and the coefficient asymptotics are governed by the next nonremovable singularity.
\end{theorem}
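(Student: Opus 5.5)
The plan has three parts: first the equivalence of admissibility with $A_1(q_j)=0$, then the eigenline reformulation, then the asymptotic consequence, with Theorem~\ref{thm:accessory-asymptotic} and Proposition~\ref{prop:generic-root-growth} supplying the last part.

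\emph{Admissibility.} Fix a simply connected overlap domain, for instance the slit disk $|z-1|<\min\{1,|a-1|\}$ with $z\in(0,1)$ inside. On it, $Y_1^{(0)}$ and $Y_1^{(1-\delta)}$ are linearly independent, because their exponents $0$ and $1-\delta=1/2$ differ by a nonintegral amount. So the decomposition \eqref{eq:connection-coefficients} exists and is unique. By hypothesis, the endpoint condition says exactly that the continuation of $Y_0$ lies in the regular line $\C\,Y_1^{(0)}$. By uniqueness of the coefficients this is equivalent to $A_1(q_j)=0$. This is the same scalar connection condition as \eqref{eq:connection-spectrum} in Theorem~\ref{thm:accessory-asymptotic}, specialized to the sector $\sigma_0=\sigma_1=0$, so the first claim is essentially a restatement of that theorem in the present basis.

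\emph{Eigenline reformulation.} Compute the local monodromy $M_1$ along a small loop around $z=1$ in the Frobenius basis at $z=1$. The function $Y_1^{(0)}$ is holomorphic near $z=1$ and so is fixed. The function $Y_1^{(1-\delta)}=(1-z)^{1-\delta}f_1$ acquires the factor $e^{2\pi i(1-\delta)}=e^{-2\pi i\delta}=-1$. Hence $M_1=\operatorname{diag}(1,-1)$, consistent with the spectral data listed in Section~\ref{sec:classical-monodromy}. Its eigenvalues are distinct, so it has exactly two eigenlines, and the regular-exponent eigenline (eigenvalue $1$) is $\C\,Y_1^{(0)}$. Applying $M_1$ to \eqref{eq:connection-coefficients} gives $M_1Y_0=A_0Y_1^{(0)}-A_1Y_1^{(1-\delta)}$. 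Therefore the line $\C\,Y_0$ is $M_1$-invariant with eigenvalue $1$ if and only if $A_1=0$. The eigenvalue $-1$ alternative would require $A_0=0$, which is the opposite endpoint parity and does not arise here.

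\emph{Asymptotic consequence.} When $A_1(q_j)=0$, $Y_0$ continues holomorphically across $z=1$. The nearest nonremovable singularity of its Taylor series at $0$ is then $z=a$, and Proposition~\ref{prop:generic-root-growth} together with \eqref{eq:lame-darboux-a} gives the stated suppression.

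The main obstacle is a path-dependence subtlety. The connection coefficients are defined only after a branch of $(1-z)^{1/2}$ and a continuation path from $0$ to the overlap are fixed, and one must check that the vanishing of $A_1$ does not depend on these choices. I would handle this by fixing the segment $(0,1)$, where all radii in \eqref{eq:other-radii} are compatible for $0<m<1$. Changing the path or the branch multiplies $A_1$ by a nonzero factor, or acts on the pair $(A_0,A_1)$ by a monodromy matrix that preserves the regular eigenline, and in either case the condition $A_1=0$ is unchanged.
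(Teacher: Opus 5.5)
Your argument is correct and follows the paper's proof, with more detail: uniqueness of the nonresonant Frobenius decomposition \eqref{eq:connection-coefficients} at $z=1$ turns the endpoint condition into $A_1(q_j)=0$; your explicit computation $M_1=\operatorname{diag}(1,-1)$ supplies the eigenline step that the paper only asserts; and the same coefficient is the one removed from the Darboux/Cauchy--Hadamard asymptotics. The one thing to discard is your path-independence claim. It is unnecessary, because the Taylor disk $|z|<1$ of $Y_0$ meets the disk $|z-1|<\min\{1,|a-1|\}$ in a convex set avoiding $z=1$, and there $A_0,A_1$ are canonical up to the sign of $(1-z)^{1/2}$. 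As stated it is also false: rerouting the path around $z=a$ replaces $Y_0$ by its image under a monodromy matrix that, in the Frobenius basis at $z=1$, generally does not preserve the line $\C\,Y_1^{(0)}$.
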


\begin{proof}
Equation \eqref{eq:connection-coefficients} decomposes the transported solution into the two local monodromy eigendirections at $z=1$. The imposed endpoint regularity removes the nonregular Frobenius branch, hence is equivalent to \eqref{eq:alignment-zero}. The same coefficient multiplies the corresponding singular term in the local Darboux expansion; its vanishing removes that contribution and exposes the next singularity in the coefficient growth. This proves the claimed equivalence between boundary admissibility, monodromy-line alignment, and the change in large-order coefficient asymptotics.
\end{proof}

For periodic or antiperiodic Lam\'e problems on a real period cell, the analogous condition is expressed by a composite monodromy or transfer matrix $T(E)$ satisfying
\begin{equation}
\operatorname{tr}T(E)=2
\qquad\text{or}\qquad
\operatorname{tr}T(E)=-2,
\end{equation}
respectively. Thus the band edges are trace-level sets of the same global monodromy representation that controls complex analytic continuation.


\section{Conclusion}
\label{sec:conclusion}

\subsection{Discussion of results}
The analysis separates three levels that are often conflated. First, the transformations from ellipsoidal to algebraic, Weierstrass, and Jacobi forms are coordinate or uniformization transformations. Second, the six maps in Section~\ref{sec:symmetry} are parameter equivalences inherited from the finite-singularity permutation subgroup of the Heun group. Third, spectral equivalence requires the accessory parameter, modulus, and boundary or monodromy data to be transported together.

The extension through the accessory matrix adds a finite-dimensional spectral layer. For $\nu=2N$, the un-gauged Lam\'e--Heun operator has an $(N+1)$-dimensional invariant polynomial module. The roots of $\Delta_N$ are therefore exact algebraic spectral data. The six anharmonic transformations act on the same parameter family, so a natural next step is to derive the induced transformation law of $\Delta_N$ itself and to identify its invariant combinations. For odd $\nu$, the corresponding gauge-polynomial sectors should be treated simultaneously; doing so would recover the full family of classical Lam\'e polynomial sectors within a single Heun-theoretic framework.

The monodromy analysis now makes the elliptic covering precise. For integer coupling, the projective local monodromies are involutions and the representation factors through the pillowcase orbifold group $\Gamma_{2,2,2,2}$. Its index-two translation subgroup is the lattice fundamental group of the elliptic double cover. The $SL_2$ character variety reduces to the cubic $x^2+y^2+z^2+xyz-4=0$, so the accessory parameter traces a Riemann--Hilbert curve on a fixed complex surface. This gives a concrete bridge between local Heun continuation and the global elliptic geometry of the Lam\'e operator.

The covering-space layer is equally rigid. Seifert--van Kampen gives $\pi_1(X_a)\cong F_3$; every connected $d$-sheeted unbranched cover therefore has free fundamental group of rank $2d+1$, and all its higher homotopy groups vanish. For finite permutation quotients, compactification converts cycle data of the colored monodromy generators directly into the genus by Riemann--Hurwitz. The chromatic Schreier graphs in Section~\ref{sec:coverings} package these algebraic and topological data in a form suitable for symbolic or numerical continuation.

The completeness and coefficient analysis adds a fourth level: analytic admissibility. The same accessory parameter that enters the three-term recurrence controls the connection coefficient at the next singular point. Consequently the physical spectrum can be read from the large-order behavior of the Frobenius coefficients. Generic parameters retain the $z=1$ branch and have unit root growth; parity-adapted eigenvalues suppress that branch and expose the $z=a$ scale $m$; polynomial eigenvalues terminate. This coefficient signature is independent of any finite truncation used to compute the spectrum and therefore supplies a useful asymptotic diagnostic for numerical continuation and spectral verification.

We have tightened the classical transformation theory of the Lam\'e equation and extended it in two directions. The geometric derivation from confocal ellipsoidal coordinates leads to the algebraic equation with singularities at the three cubic roots and the single point at infinity. Weierstrass uniformization converts this equation into an elliptic Schr\"odinger problem, while the Jacobi form follows only after the correct scaling and imaginary half-period translation. The substitution $z=\sn^2(u\mid k^2)$ then embeds the Lam\'e problem into the canonical Heun family with
\begin{equation}
a=k^{-2},\qquad
q=-\frac{h}{4k^2},\qquad
\alpha=-\frac{\nu}{2},\qquad
\beta=\frac{\nu+1}{2},\qquad
\gamma=\delta=\epsilon=\frac12.
\end{equation}
Within this Lam\'e locus, the finite-singularity permutation subgroup of the Heun transformation group acts as the six-element anharmonic group. We derived its complete action on $(a,q)$ and on $(m,h)$, and identified the harmonic and equianharmonic orbit degeneracies. For even integral coupling $\nu=2N$, we further obtained the invariant polynomial space, tridiagonal accessory matrix, determinant recurrence, and finite spectral polynomial. These results place classical Lam\'e transformations, Heun parameter symmetry, and exact finite spectral data in one framework. In addition, the self-adjoint Lam\'e realizations on a period cell possess complete eigenfunction systems, while their Heun representatives reveal a sharp complex-analytic spectral signature. After the appropriate endpoint-parity gauge, generic Frobenius coefficients have unit root growth, admissible nonpolynomial eigenvalues satisfy $|d_n|^{1/n}\to m$ in the limsup sense, and exact polynomial states terminate. The accessory spectrum can therefore be interpreted as the zero set of a connection coefficient whose vanishing suppresses the dominant Darboux contribution and enlarges the analytic continuation disk from the nearest singularity $z=1$ to $z=1/m$.

The global continuation theory adds a topological interpretation of the same spectral data. The four local monodromy generators satisfy the van Kampen relation and generate an accessory-dependent representation of the free group $F_3$. For integer Lam\'e coupling their projective classes are involutions, forcing the representation through the $(2,2,2,2)$ pillowcase orbifold group and explaining the elliptic double cover as its translation subgroup. After $SL_2$ normalization, the Lam\'e monodromy characters lie on the cubic surface $x^2+y^2+z^2+xyz-4=0$. Spectral regularity is equivalent to alignment of the transported Frobenius line with a local monodromy eigendirection, which is exactly the condition that removes the leading singular coefficient asymptotic. Finally, monodromy coverings are aspherical: their higher homotopy groups vanish, finite-sheeted covers have fundamental-group rank $2d+1$, and finite permutation quotients acquire compact genera determined explicitly by the cycle structure of the colored continuation generators. These results unify spectral quantization, analytic continuation, monodromy characters, and covering-space topology within the Lam\'e--Heun correspondence.
\subsection{Applications}

The results have several mathematical-physics applications. In periodic quantum mechanics,
the elliptic double cover and the Floquet--Bloch interpretation connect the Lam\'e spectrum to
finite-gap band structure and transport in periodic media. In cosmological optics, Lam\'e-type
equations occur in distance--redshift and beam-propagation models, so analytic continuation
and connection data provide a global alternative to a single local series representation. In
black-hole and gravitational-wave problems, Heun and confluent-Heun equations arise after
separation of variables; in that setting the singular points represent distinguished boundaries
such as horizons and infinity, while connection coefficients and monodromy data encode global
wave transport. The condition that an unwanted connection coefficient vanish has the same
mathematical form as a resonance or quasinormal-mode condition, and the large-order
coefficient transition derived here suggests an additional numerical diagnostic for locating
spectral values.

The topological results should be interpreted as topology of the complexified differential-
equation domain rather than topology of physical spacetime itself. The monodromy cover is
the natural space on which multivalued solutions become single valued, while the compactified
cover records how branching and singularity data modify the global analytic surface.

\subsection{Open problems and further research}

Several problems remain open. A first problem is to determine the transformation law of the
accessory spectral polynomial $\Delta_N(q;a)$ under the full Lam\'e-preserving anharmonic
subgroup and to identify invariant combinations of its roots. A second is to extend the finite-
sector analysis uniformly to all odd and half-integral Lam\'e couplings, including the required
local exponent gauges. A third is to characterize the Riemann--Hilbert map
$q\mapsto(x(q),y(q),z(q))$ on the Lam\'e Fricke cubic and to identify those accessory values
for which the projective monodromy is reducible, unitary, finite, or algebraic.

A further analytic problem is to develop uniform asymptotic expansions of the Frobenius
coefficients across spectral transitions, including coalescing singularities and confluent Heun
limits. It is also natural to ask whether the coefficient-growth drop from $1$ to $m$ can be
converted into a stable numerical eigenvalue algorithm with rigorous error bounds. On the
topological side, one may classify finite permutation quotients arising from physically relevant
monodromy representations, compute the genera of their compactifications, and study how those
genera change under confluence or parameter degeneration. Finally, a direct application to a
specific astrophysical wave equation---for example a Kerr radial or angular equation---would
allow the accessory parameter, monodromy traces, and connection coefficients developed here
to be compared with observable scattering or resonance data.

\section*{Declaration of competing interest}
The authors declare no known competing financial interests or personal relationships that could have influenced the work reported in this paper.

\section*{Data availability}
No external datasets were used. The numerical values in Fig.~\ref{fig:bands} are generated directly from the finite accessory matrix derived in Section~\ref{sec:qes}.

\end{document}